\newcolumntype{d}{D{.}{.}{-1} } 
\DeclareMathAlphabet{\mathdutchcal}{U}{dutchcal}{m}{n}
\SetMathAlphabet{\mathdutchcal}{bold}{U}{dutchcal}{b}{n}
\DeclareMathAlphabet{\mathdutchbcal}{U}{dutchcal}{b}{n}
\numberwithin{equation}{section}
\newtheorem{teo}{Theorem}[section]
\newtheorem{cor}[teo]{Corollary}
\newtheorem{prop}[teo]{Proposition}
\theoremstyle{definition}
\theoremstyle{remark}
\newcommand{\R}{\mathbb{R}} 
\newcommand{\N}{\mathbb{N}} 
\newcommand{\V}{\mathcal{V}} 
\newcommand{\Fock}{\mathcal{F}} 
\newcommand{\C}{\mathbb{C}} 
\newcommand{\Barg}{\mathcal{B}} 
\newcommand{\Log}{\ensuremath{\mathrm{Log}_-}}
\newcommand{\pfrac}[2]{\frac{\partial #1}{\partial #2}}
\title{A new optimal estimate for the norm of time-frequency localization operators}
\author{FEDERICO RICCARDI}
\begin{document}
	
	\begin{abstract}
		\noindent In this paper we provide an optimal estimate for the operator norm of time-frequency localization operators with Gaussian window $L_{F,\varphi} : L^2(\R^d) \rightarrow L^2(\R^d)$, under the assumption that $F \in L^p(\R^{2d}) \cap L^q(\R^{2d})$ for some $p$ and $q$ in $(1,+\infty)$. We are also able to characterize optimal weight functions, whose shape turns out to depend on the ratio $\|F\|_q / \|F\|_p$. Roughly speaking, if this ratio is ``sufficiently large'' or ``sufficiently small'' optimal weight functions are certain Gaussians, while if it is in the intermediate regime the optimal functions are no longer Gaussian functions. As an application we extend Lieb's uncertainty inequality to the space $L^p + L^q$.
	\end{abstract}

\keywords{Short-time Fourier transform, Fourier transform, time-frequency localization operator, uncertainty principle, Fock space}
\subjclass[2020]{42B10, 49Q20, 49R05, 81S30, 94A12}
\maketitle

\section{Introduction}

	\emph{Time-frequency localization operators} were originally introduced by Daubechies \cite{daubechies} and by Berezin \cite{berezin} and, roughly speaking, are designed in order to localize a function simultaneously in time and frequency. Defining the (normalized in $L^2$) ``Gaussian window'' as 
	\begin{equation}\label{eq: Gaussian window}
		\varphi(t) = 2^{d/4}e^{-\pi |t|^2}, \quad t \in \R^d,
	\end{equation}
	the main tool to introduce these operators is the short-time Fourier transform with Gaussian window, which is defined as
	\begin{equation}\label{eq: definition STFT}
		\V_{\varphi} f(x, \omega) = \int_{\R^d} f(t) \varphi(t-x) e^{- 2 \pi i \omega \cdot t} \, dt, \quad x,\omega \in \R^d,
	\end{equation}
	for every $f \in L^2(\R^d)$. Then, given a function $F : \R^{2d} \rightarrow \C$, we can define the time-frequency localization operator with window $\varphi$ and weight function $F$ as
	\begin{equation}\label{eq: definition localization operator}
		L_{F,\varphi} \coloneqq \V_{\varphi}^* F \V_{\varphi}.
	\end{equation}
	Since their introduction, these operators were intensively studied, in particular regarding properties such as boundedness, compactness, Schatten properties and asymptotic for the eigenvalues (see, for example, \cite{cordero, fernandez, wong} for some classical result or \cite{abreu, abreu2021donoho, dias2022uncertainty, galbis2022norm, luef, nicola2023uncertainty, nicolatilli_fk, nicolatilli_norm} for more recent results).
	However, results concerning optimal estimates and optimal weight functions are still few. In order to present the state of the art, for the introduction we confine ourself in the 1-dimensional case. Moreover, so as to be more in line with our approach, we will state the results under the form of constrained optimization problems. Therefore, we will deal with the problem of finding the optimal constant $C$ such that
	\begin{equation*}
		\|L_{F,\varphi}\| \leq C
	\end{equation*}
	and those weight functions $F$ that achieve equality among all possible weight functions satisfying one or more constraints for some $L^p$ norm.
	
	The following result can be obtained, through a duality argument, using Lieb's inequality for the STFT \cite{lieb_entropy}, which in dimension 1 is
	\begin{equation}\label{eq:Lieb's inequality}
		\| \V_{\varphi} f \|_p^p \leq \dfrac{2}{p} \, \|f\|_2^p,
	\end{equation}
	for $p \in [2, +\infty)$, while optimal weight functions can be obtained thanks to the fact due to Carlen \cite{carlen} that functions achieving equaility in \eqref{eq:Lieb's inequality} are of the kind \eqref{eq: optimal function Nicola-Tilli Faber-Krahn for STFT}.
	
	In the following we let:
	\begin{equation*}
		\kappa_p \coloneqq \dfrac{p-1}{p}.
	\end{equation*}	
	\begin{teo}[Lieb's uncertainty inequality - dual form]\label{th:Lieb's inequality}
		Let $p \in (1,+\infty)$ and $A \in (0,+\infty)$. If $F \in L^p(\R^{2})$ with $\|F\|_p \leq ~ A$, then:
		\begin{equation}\label{eq: estimate Lieb}
			\|L_{F,\varphi}\| \leq \kappa_p^{\kappa_p} A
		\end{equation}
		and equality is achieved if and only if, for some $\theta \in \R$ and some $z_0 \in \R^2$,
		\begin{equation}\label{eq: optimal function Lieb}
			F(z) = A \kappa_p^{-\frac{1}{p}} e^{i \theta} e^{-\frac{\pi}{p-1}|z-z_0|^2}, \quad z \in \R^2.
		\end{equation}
	\end{teo}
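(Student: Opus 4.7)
The plan is to implement the duality argument suggested in the text: unfold the localization operator into a phase-space integral and then apply a chain of three inequalities (Hölder, Cauchy--Schwarz, Lieb) that compound to the sharp constant $\kappa_p^{\kappa_p}$.

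\textbf{Step 1 (unfolding).} Since $L_{F,\varphi}=\V_\varphi^* F\V_\varphi$ and $\V_\varphi$ is an isometry of $L^2(\R)$ into $L^2(\R^2)$, for any unit-norm $f,g\in L^2(\R)$ I can write
\[
\langle L_{F,\varphi}f,g\rangle=\int_{\R^2}F(z)\,\V_\varphi f(z)\,\overline{\V_\varphi g(z)}\,dz,
\]
so that $\|L_{F,\varphi}\|=\sup_{\|f\|_2=\|g\|_2=1}|\langle L_{F,\varphi}f,g\rangle|$.

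\textbf{Step 2 (three-step estimate).} First, Hölder with exponents $p$ and $p'$ gives
\[
|\langle L_{F,\varphi}f,g\rangle|\le\|F\|_p\,\|\V_\varphi f\cdot\overline{\V_\varphi g}\|_{p'}.
\]
Next, Cauchy--Schwarz (Hölder with exponents $2,2$) splits the product:
\[
\|\V_\varphi f\cdot\overline{\V_\varphi g}\|_{p'}\le\|\V_\varphi f\|_{2p'}\,\|\V_\varphi g\|_{2p'}.
\]
Finally, since $2p'\ge 2$, Lieb's inequality \eqref{eq:Lieb's inequality} applied to $f$ and $g$ with exponent $2p'$ gives $\|\V_\varphi f\|_{2p'}\le(1/p')^{1/(2p')}\|f\|_2=\kappa_p^{\kappa_p/2}\|f\|_2$, and likewise for $g$. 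Multiplying these three bounds together and using $\|F\|_p\le A$ yields $|\langle L_{F,\varphi}f,g\rangle|\le\kappa_p^{\kappa_p}A$, which is \eqref{eq: estimate Lieb}.

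\textbf{Step 3 (equality analysis).} The delicate part is the characterization of the extremals, obtained by tracking when each of the three inequalities is saturated. Hölder equality forces $|F|^p$ to be proportional to $|\V_\varphi f\,\V_\varphi g|^{p'}$ and the phase of $F\,\V_\varphi f\,\overline{\V_\varphi g}$ to be constant. Cauchy--Schwarz equality forces $|\V_\varphi f|$ to be proportional to $|\V_\varphi g|$. Lieb equality, combined with Carlen's rigidity theorem, forces both $f$ and $g$ to be constant multiples of time-frequency shifts of $\varphi$; the proportionality of the moduli of their STFTs then forces the two shifts to coincide at a common point $z_0\in\R^2$. Substituting these Gaussian profiles into the Hölder proportionality $|F|^p\propto|\V_\varphi f|^{2p'}$ gives $|F(z)|\propto e^{-\pi|z-z_0|^2/(p-1)}$, and fixing the phase by the Hölder alignment condition together with the normalization $\|F\|_p=A$ yields exactly the family \eqref{eq: optimal function Lieb}.

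\textbf{Main obstacle.} The chain of inequalities is routine once Lieb is available; the real work is the equality analysis, which relies crucially on Carlen's sharp characterization of Lieb extremizers. Care is also needed to verify that the three equality conditions are simultaneously compatible (in particular that they force $f$ and $g$ to share the same time-frequency center), and that the resulting $F$ has exactly the prescribed shape and constant.
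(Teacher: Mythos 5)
Your proposal is correct and is exactly the duality argument the paper alludes to (it only sketches the route: Hölder against $F$, Lieb's inequality, and Carlen's rigidity for the equality case), and your Hölder--Cauchy-Schwarz--Lieb chain with the constant $\kappa_p^{\kappa_p/2}\cdot\kappa_p^{\kappa_p/2}=\kappa_p^{\kappa_p}$ is the natural way to fill in the details. The only point worth making explicit in the ``only if'' direction is that $L_{F,\varphi}$ is compact when $F\in L^p$ with $p<\infty$, so the operator norm is actually attained by some normalized pair $f,g$, which is what licenses arguing that each of the three inequalities in your chain is saturated.
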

	According to this theorem, optimal weight functions (up to translations and a phase factor) are Gaussians.
	
	A further result was obtained only recently by Nicola and Tilli in \cite{nicolatilli_norm}, where they considered the case $F \in L^p(\R^{2}) \cap L^{\infty}(\R^{2})$ for $p \in [1,+\infty)$. In this paper we are interested only in the case $p>1$ and we refer the reader to the paper for the full theorem.
	\begin{teo}[\cite{nicolatilli_norm}]\label{th:Nicola-Tilli norm}
		Let $p \in (1,+\infty)$ and $A,B \in (0,+\infty)$. Let $F \in L^p(\R^{2}) \cap L^{\infty} (\R^{2})$ with $\|F\|_{\infty} \leq A$ and $\|F\|_p \leq B$.
		\begin{enumerate}[label*=(\roman{*})]
			\item If $B/A \leq \kappa_p^{1/p}$ then
			\begin{equation}\label{eq: estimate Nicola-Tilli first case}
				\|L_{F,\varphi}\| \leq \kappa_p^{\kappa_p} B,
			\end{equation}
			with equality if and only if, for some $\theta \in \R$ and $z_0 \in \R^2$,
			\begin{equation}\label{eq: optimal function Nicola-Tilli first case}
				F(z) = B \kappa_p^{-\frac{1}{p}} e^{i \theta} e^{-\frac{\pi}{p-1}|z-z_0|^2}, \quad z \in \R^2.
			\end{equation}
			\item If $B/A > \kappa_p^{1/p}$ then
			\begin{equation}\label{eq: estimate Nicola-Tilli second case}
				\|L_{F,\varphi}\| \leq A \left(1 - \dfrac{e^{\kappa_p - (B / A)^p}}{p}\right),
			\end{equation}
			with equality if and only if, for some $\theta \in \R$ and $z_0 \in \R^2$,
			\begin{equation}\label{eq: optimal function Nicola-Tilli second case}
				F(z) = e^{i \theta} \min \left\{ \lambda e^{-\frac{\pi}{p-1}|z-z_0|^2}, A \right\}, \quad z \in \R^2,
			\end{equation}
			where $\lambda = A e^{(B/A)^p/(p-1)-1/p} > A$.
		\end{enumerate}
		Moreover, if $F$ achieves equality in \eqref{eq: estimate Nicola-Tilli first case} or \eqref{eq: estimate Nicola-Tilli second case} then $|\langle L_{F,\varphi}f,g \rangle | = \|L_{F,\varphi}\|$ for some normalized $f,g \in L^2(\R)$ if and only if both are of the kind
		\begin{equation}\label{eq: optimal function Nicola-Tilli Faber-Krahn for STFT}
			x \mapsto c \, e^{2 \pi i x \omega_0} \varphi(x-x_0), \quad x \in \R,
		\end{equation}
		 possibly with different $c$'s but with the same $(x_0, \omega_0) \in \R^{2}$ which coincides with the centre of $F$.
	\end{teo}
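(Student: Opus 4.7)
The plan is to reduce the problem to a one-variable variational problem for the distribution function of $F$, using the Faber--Krahn type inequality for the short-time Fourier transform established by Nicola and Tilli in \cite{nicolatilli_fk} as the main analytic input. By Cauchy--Schwarz,
\[
	|\langle L_{F,\varphi}f,g\rangle|^2 \leq \langle L_{|F|,\varphi}f,f\rangle \, \langle L_{|F|,\varphi}g,g\rangle \leq \|L_{|F|,\varphi}\|^2,
\]
so it suffices to prove the bound for $F \geq 0$ (equality forcing $F$ to have constant phase). Then $L_{F,\varphi}$ is positive self-adjoint and
\[
	\|L_{F,\varphi}\| = \sup_{\|f\|_2=1} \int_{\R^2} F(z)\, |\V_\varphi f(z)|^2\, dz.
\]
Setting $u = |\V_\varphi f|^2$ for a normalized $f$, one has $u \geq 0$, $\|u\|_1 = 1$ and $u \leq 1$ pointwise, and the task becomes to bound $\sup \int Fu$.

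The key step would be to combine the layer-cake decomposition $F = \int_0^A \chi_{\{F>t\}}\,dt$ (valid because $F \leq A$) with the Faber--Krahn inequality $\int_E|\V_\varphi f|^2\,dz \leq 1-e^{-|E|}$ (for $\|f\|_2 = 1$), obtaining via Fubini
\[
	\int Fu\,dz = \int_0^A \int_{\{F>t\}} u(z)\,dz\,dt \leq \int_0^A \bigl(1-e^{-m(t)}\bigr)\,dt, \qquad m(t) := |\{F>t\}|.
\]
Since layer-cake also gives $\|F\|_p^p = p\int_0^A t^{p-1}m(t)\,dt$, the estimate reduces to the one-variable problem of minimizing $\int_0^A e^{-m(t)}\,dt$ over decreasing $m \geq 0$ with $p\int_0^A t^{p-1}m(t)\,dt \leq B^p$. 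A standard Lagrange multiplier analysis gives the stationarity condition $e^{-m(t)}=\lambda p\, t^{p-1}$, together with a possible cut-off at $t^* := (\lambda p)^{-1/(p-1)}$ enforcing $m \geq 0$. If $t^* < A$, the $L^p$ constraint pins down $t^* = B\kappa_p^{-1/p}$, which requires $B/A \leq \kappa_p^{1/p}$ and, after an explicit integration, yields the bound $B\kappa_p^{\kappa_p}$ of part~(i); in the complementary regime $B/A > \kappa_p^{1/p}$, no cut-off occurs, $m$ is strictly positive on $[0,A]$, and one computes $\lambda = (pA^{p-1})^{-1}e^{\kappa_p-(B/A)^p}$ and the bound of part~(ii).

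To identify the extremizers, I would trace back the equality cases through the chain. Saturation of Faber--Krahn forces every level set $\{F>t\}$ to be a disk centered at a common point $z_0$, so $F$ must be a radial decreasing function of $|z - z_0|$; simultaneously, the Faber--Krahn rigidity forces $|\V_\varphi f|^2(z) = e^{-\pi|z-z_0|^2}$, i.e.\ $f$ is a Gaussian time-frequency shift of $\varphi$ centered at $z_0$. Inverting the distribution function $m$ produced by the Lagrange step (trivial on $[t^*,A]$ in case~(i), nontrivial throughout in case~(ii)) recovers precisely the weights \eqref{eq: optimal function Nicola-Tilli first case} and \eqref{eq: optimal function Nicola-Tilli second case}, and the characterization of maximizing pairs $f,g$ is then immediate because only Gaussian wavepackets centered at $z_0$ can saturate Faber--Krahn on every sub-level set of such an $F$ simultaneously.

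The main obstacle I anticipate is precisely this rigidity analysis: three sharp inequalities (Hardy--Littlewood implicit in bounding $\int_{\{F>t\}}u$, Faber--Krahn at each level, and the Lagrange calculus) must all be saturated simultaneously, and in part~(ii) the identification of the plateau $\{F = A\}$ with the super-level set $\{|\V_\varphi f|^2 > (A/\lambda)^{p-1}\}$ requires care, since the explicit value of $\lambda$ in terms of $A$ and $B$ must match the Gaussian profile of $u$. Everything else should be routine once the Faber--Krahn inequality of \cite{nicolatilli_fk} is available.
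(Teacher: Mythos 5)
Your proposal takes essentially the same route as the cited source \cite{nicolatilli_norm}. (The paper itself does not reprove Theorem \ref{th:Nicola-Tilli norm}; it cites it, but its proof of the generalization Theorem \ref{th:main theorem} follows the same scheme, so that is the natural comparison.) The plan matches: reduce to $F\geq 0$, combine layer cake with the level-set Faber--Krahn bound to obtain $\|L_{F,\varphi}\|\leq\int_0^A(1-e^{-m(t)})\,dt$ with $m(t)=|\{F>t\}|$ --- this is exactly the $d=1$ case of Theorem \ref{th: norm limitation} --- then solve a one-variable constrained variational problem for $m$ and invert $m$ to recover the extremal weight. Your computations of $t^*=B\kappa_p^{-1/p}$, the threshold $B/A=\kappa_p^{1/p}$, the multiplier $\lambda=(pA^{p-1})^{-1}e^{\kappa_p-(B/A)^p}$, and the two resulting bounds all check out, as does the truncated-Gaussian extremizer.

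Two places are treated more lightly than they deserve. First, ``a standard Lagrange multiplier analysis'' conceals most of the work: making it rigorous requires existence of a maximizer (the analogue of Proposition \ref{prop:existence of maximizer}, via Helly plus dominated convergence), the justification that monotonicity can be imposed a priori (Proposition \ref{monotonicity of maximizer}, via decreasing rearrangement), a genuine first-variation argument rather than formal stationarity, a sign argument forcing $\lambda\geq 0$, and the matching condition $m(t^*)=0$ at the cutoff (the analogue of the continuity step in Theorem \ref{nonstandard variational problem solution theorem}). None of these would fail, but none is automatic either. Second, for the characterization of maximizing pairs $(f,g)$: equality in your opening Cauchy--Schwarz step only yields that $|\V_\varphi f|$ and $|\V_\varphi g|$ are proportional on $\{F\neq 0\}$, and upgrading this to the assertion that both $f$ and $g$ are ground-state wavepackets with the same centre $z_0$ uses the analyticity of the Bargmann-transform side (so that proportionality of the moduli on a set of positive measure forces proportionality of the analytic functions); your sketch does not mention this, and it is the one genuinely delicate point in the equality analysis.
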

	We point out that optimal weight functions are not always Gaussian, indeed \eqref{eq: optimal function Nicola-Tilli second case} are \emph{Gaussians truncated above}.
	
	The aim of this paper is to find analogous estimates
	for any function $F \in L^p(\R^{2}) \cap L^q(\R^{2})$, where $p,q \in (1,+\infty)$. The main motivation is to understand the transition between the setting for Lieb's estimate ($p=q$) and that of Nicola and Tilli ($q = +\infty)$ and in particular how optimal weight functions behave in this intermediate regime. Another reason lies in the possible consequences of such optimal estimates, of which Corollary \ref{cor:estimate for spectrogram} and Corollary \ref{cor:estimate for functions in Fock space} below are an example. To state the result, we introduce the following notation:
	\begin{equation*}
		\Log(x) = \max\{-\log(x),0\}.
	\end{equation*}
	\begin{teo}\label{th:main theorem dimension 1}
		Let $p,q \in (1,+\infty)$ and $A,B \in (0, +\infty)$. Let $F \in L^p(\R^2) \cap L^q(\R^2)$ with $\|F\|_p \leq A$ and $\|F\|_q \leq B$.
		\begin{enumerate}[label=(\roman*)]
			\item If $B/A \geq \kappa_p^{(\frac{1}{q}-\frac{1}{p})} \left(\dfrac{p}{q}\right)^{\frac{1}{q}}$ $\left(\text{respectively\ } B/A \leq \kappa_q^{(\frac{1}{q}-\frac{1}{p})} \left(\dfrac{p}{q}\right)^{\frac{1}{p}}\right)$, then:
			\begin{equation}\label{eq: estimate Riccardi first case dimension 1}
				\|L_{F,\varphi}\| \leq \kappa_p^{ \kappa_p} A \quad (\text{resp.\ } \|L_{F,\varphi}\| \leq \kappa_q^{ \kappa_q} B),
			\end{equation}
			with equality if and only if, for some $\theta \in \R$ and some $z_0 \in \R^{2}$,
			\begin{equation*}\label{eq: optimal function Riccardi first case dimension 1}
				F(z) = e^{i \theta} \lambda e^{-\frac{\pi}{p-1}|z-z_0|^2} \quad (\text{resp.\ } F(z) = e^{i \theta} \lambda e^{-\frac{\pi}{q-1}|z-z_0|^2}),
			\end{equation*}
			where $\lambda = \kappa_p^{-1/p}A$ (resp. $\lambda = \kappa_q^{-1/q}B$).
			\item If $\kappa_q^{(\frac{1}{q}-\frac{1}{p})} \left(\dfrac{p}{q}\right)^{\frac{1}{p}} < B/A < \kappa_p^{(\frac{1}{q}-\frac{1}{p})} \left(\dfrac{p}{q}\right)^{\frac{1}{q}}$, then
			\begin{equation}\label{eq: estimate Riccardi second case dimension 1}
				\|L_{F,\varphi}\| \leq T - \dfrac{\lambda_1}{p} T^p - \dfrac{\lambda_2}{q} T^q
			\end{equation}
			where $u(t) = \Log (\lambda_1 t^{p-1} + \lambda_2 t^{q-1})$ and $\lambda_1, \lambda_2 > 0$ are uniquely determined by
			\begin{equation*}
				p \int_0^{+\infty} t^{p-1} u(t) \,dt = A^p, \quad q \int_0^{+\infty} t^{q-1} u(t) \,dt = B^q.
			\end{equation*}
			and $T > 0$ the unique value such that $\lambda_1 T^{p-1} + \lambda_2 T^{q-1} = 1$. Moreover, the function $t \mapsto -\log(\lambda_1 t^{p-1} + \lambda_2 t^{q-1})$ defined on $(0,T]$ is invertible and we denote by $\psi : [0, +\infty) \rightarrow (0,T]$ its inverse. Then, equality in \eqref{eq: estimate Riccardi second case dimension 1} is achieved if and only if, for some $\theta \in \R$ and some $z_0 \in \R^{2}$,
			\begin{equation}\label{eq:optimal weight function Riccardi second case dimension 1}
				F(z) = e^{i \theta} \psi(\pi |z-z_0|^2).
			\end{equation}
		\end{enumerate}
		Finally, if $F$ achieves equality in \eqref{eq: estimate Riccardi first case dimension 1} or \eqref{eq: estimate Riccardi second case dimension 1} then $|\langle L_{F, \varphi} f, g \rangle| = \|L_{F, \varphi}\|$ for some normalized $f, g \in L^2(\R)$ if and only if are both of the kind \eqref{eq: optimal function Nicola-Tilli Faber-Krahn for STFT}, possibly with different $c$'s but same $(x_0, \omega_0) \in \R^{2}$.
	\end{teo}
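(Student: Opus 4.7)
The strategy is to recast the problem as a variational inequality on the spectrogram $\rho = |\V_\varphi f|^2$ (with $\|f\|_2 = 1$, so $\int \rho = 1$), and then combine a two-parameter Young--Legendre inequality with the Hardy--Littlewood--Pólya (HLP) majorization driven by the Nicola--Tilli Faber--Krahn inequality \cite{nicolatilli_fk}. This parallels the method of \cite{nicolatilli_norm}, but with two Lagrange multipliers, one per constraint. By a Cauchy--Schwarz argument applied to
\begin{equation*}
\langle L_{F,\varphi} f, g\rangle = \int_{\R^{2d}} F(z)\, \V_\varphi f(z)\, \overline{\V_\varphi g(z)}\, dz,
\end{equation*}
one reduces to bounding $\sup_{\|f\|_2 = 1} \int |F|\rho\, dz$, so without loss of generality $F \geq 0$.

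For case (ii), fix $\lambda_1, \lambda_2 > 0$, set $\phi(t) = \lambda_1 t^{p-1} + \lambda_2 t^{q-1}$, and let $H(y) = \sup_{t \geq 0}(ty - \tfrac{\lambda_1}{p} t^p - \tfrac{\lambda_2}{q} t^q)$; this $H$ is convex, nondecreasing, with $H(0) = 0$, and attains its supremum at $t = \phi^{-1}(y)$, i.e.\ $t = \psi(-\log y)$ when $y \leq 1$. The pointwise Young inequality $F\rho \leq \tfrac{\lambda_1}{p} F^p + \tfrac{\lambda_2}{q} F^q + H(\rho)$ integrates to
\begin{equation*}
\int F\rho\, dz \leq \tfrac{\lambda_1}{p} A^p + \tfrac{\lambda_2}{q} B^q + \int H(\rho)\, dz.
\end{equation*}
The Nicola--Tilli Faber--Krahn inequality yields $\int_0^s \rho^*(t)\, dt \leq 1 - e^{-s}$ for all $s > 0$, which is HLP majorization of $\rho^*$ by $e^{-t}$; since $H$ is convex nondecreasing vanishing at $0$, HLP gives $\int H(\rho)\, dz \leq \int_0^\infty H(e^{-t})\, dt$. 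The change of variable $t = \psi(s)$ (equivalently $e^{-s} = \phi(t)$) together with integration by parts then produces
\begin{equation*}
\int_0^\infty H(e^{-s})\, ds = T - \tfrac{\lambda_1}{p} T^p - \tfrac{\lambda_2}{q} T^q - \tfrac{\lambda_1}{p}\!\int\!\psi^p - \tfrac{\lambda_2}{q}\!\int\!\psi^q,
\end{equation*}
and choosing $(\lambda_1, \lambda_2)$ so that $\int \psi^p = A^p$ and $\int \psi^q = B^q$ makes the $A^p$ and $B^q$ contributions cancel, producing \eqref{eq: estimate Riccardi second case dimension 1}.

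The main technical obstacle is to prove that the map $(\lambda_1, \lambda_2) \mapsto (\int \psi^p, \int \psi^q)$ is a bijection from $(0, \infty)^2$ onto the set of admissible $(A^p, B^q)$ corresponding to the intermediate regime of (ii); this requires a careful monotonicity analysis of $\psi$ with respect to each multiplier together with boundary asymptotics, in which the degenerate limits $\lambda_2 \to 0$ and $\lambda_1 \to 0$ must recover respectively the Lieb Gaussian extremizers \eqref{eq: optimal function Lieb} for the $L^p$ and $L^q$ constraints, and the boundary values of $B/A$ must coincide with the thresholds stated in (i). Case (i) then follows either as the degenerate limit of (ii) or directly from Theorem \ref{th:Lieb's inequality} applied to the active constraint, noting that at each threshold the corresponding Lieb extremizer automatically satisfies the slack one. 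For the equality characterization: equality in Young forces $\rho = \phi(F)$ a.e.; equality in HLP majorization forces $\rho^*(t) = e^{-t}$, which by Carlen's theorem \cite{carlen} means $f$ has the form \eqref{eq: optimal function Nicola-Tilli Faber-Krahn for STFT} and $|\V_\varphi f|^2(z) = e^{-\pi|z - z_0|^2}$ for some $z_0$; combining, $F(z) = \phi^{-1}(e^{-\pi|z-z_0|^2}) = \psi(\pi|z - z_0|^2)$, giving \eqref{eq:optimal weight function Riccardi second case dimension 1}. Running the same analysis through the bilinear form $\int F\, \V_\varphi f\, \overline{\V_\varphi g}\, dz$ and tracking the Cauchy--Schwarz equality conditions forces $g$ to also be of the form \eqref{eq: optimal function Nicola-Tilli Faber-Krahn for STFT} with the same center as $f$ (which coincides with $z_0$), yielding the final claim.
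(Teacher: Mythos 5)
Your argument is correct in outline but takes a genuinely dual route to the paper's. The paper first quotes the bound $\|L_{F,\varphi}\| \leq \int_0^\infty G(\mu(t))\,dt$ (Theorem \ref{th: norm limitation}, with $G(s)=1-e^{-s}$ when $d=1$), rewrites the two $L^r$-constraints in terms of the distribution function $\mu$ via the layer-cake formula, and then maximizes $\int G$ over all admissible distribution functions: existence of a maximizer by Helly's theorem and dominated convergence (Proposition \ref{prop:existence of maximizer}), removal of the monotonicity constraint by rearrangement (Proposition \ref{monotonicity of maximizer}), an Euler--Lagrange argument giving $G'(u)=\lambda_1 t^{p-1}+\lambda_2 t^{q-1}$, and then a separate analysis of positivity, continuity, and uniqueness of the multipliers. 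You instead fix the multipliers, apply the two-parameter Young--Legendre inequality pointwise to $F\rho$ (with $\rho=|\V_\varphi f|^2$), and invoke the Faber--Krahn inequality in the rearranged form $\int_0^s\rho^*\leq 1-e^{-s}$ together with Hardy--Littlewood--P\'olya majorization to bound $\int H(\rho)$; this is precisely the Lagrangian dual of the paper's variational problem. Your computation of $\int_0^\infty H(e^{-s})\,ds$ is correct, and after an integration by parts your conditions $\int_0^\infty\psi(s)^p\,ds=A^p$, $\int_0^\infty\psi(s)^q\,ds=B^q$ are exactly the paper's constraint equations accompanying \eqref{nonstandard variational problem solution formula}; the cancellation then yields \eqref{eq: estimate Riccardi second case dimension 1}. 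The dual route is arguably cleaner: it bypasses the existence-of-maximizer and rearrangement propositions and makes the equality characterization (pointwise Young equality $\rho=\phi(F)$, HLP equality $\rho^*=e^{-t}$, then Carlen's theorem) fall out transparently, with case (i) recovered at the boundary as you say.

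The one genuine gap --- which you yourself flag as the main technical obstacle --- is the existence \emph{and} uniqueness of $(\lambda_1,\lambda_2)\in(0,\infty)^2$ solving the two constraint equations in the intermediate regime. Monotonicity of each constraint functional in each variable separately does not give either existence or uniqueness by itself, and this is precisely where the bulk of the proof of Theorem \ref{nonstandard variational problem solution theorem} lives: the paper parametrizes the two level curves as graphs $c_2=\phi(c_1)$ and $c_2=\gamma(c_1)$, shows by a Chebyshev-type symmetrization of a double integral that at any intersection point $(\phi-\gamma)'<0$, and then runs a connectedness argument to conclude the intersection is unique; existence of some admissible pair is inherited for free from the compactness argument of Proposition \ref{prop:existence of maximizer}. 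In the dual setup you lose that automatic existence, so you would need a separate intermediate-value or degree-theoretic argument using the boundary asymptotics you describe, in addition to the transversality argument for uniqueness. Neither is carried out in the outline, and they are where the real work lies.
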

	One reason we decided to introduce our result in dimension 1 is that the estimate for $\|L_{F,\varphi}\|$ \eqref{eq: estimate Riccardi second case dimension 1}, is somewhat ``explicit''. In Section \ref{sec:Main result and proof} we are going to consider the problem in generic dimension $d$, where this estimate is less explicit. Also the expression of optimal weight functions is not explicit. Nevertheless, their profile can be computed numerically. In Figure \ref{fig:optimal weight function} there is an example of optimal weight function in the intermediate regime.
	
	In Section \ref{sec:Some corollaries} we are going to prove some immediate yet important consequences of Theorem \ref{th:main theorem dimension 1} and its counterpart in generic dimension, Theorem \ref{th:main theorem}. In particular,  Corollary \ref{cor:estimate for spectrogram} is an extension of Lieb's inequality \eqref{eq:Lieb's inequality} to the spaces $L^p + L^q$ with a characterization of extremal functions, while Corollary \ref{cor:estimate for functions in Fock space} is an analogous result for functions in the Fock space.
	
	Also, similar results should hold for wavelet localization operators and functions in Bergman space (see e.g. \cite{nicolatilli_norm}). We plan to investigate these issues in a subsequent work.
	
	\section{Main result and proof}\label{sec:Main result and proof}
	In this Section we are going to state and prove the analogous of Theorem \ref{th:main theorem dimension 1} in generic dimension $d$. To do so we need to introduce the following notation:
	\begin{equation}\label{eq: definition G}
		G(s) = \int_{0}^{s} e^{-(d!\tau)^{\frac{1}{d}}}\, d\tau.
	\end{equation}
	
	\begin{teo}\label{th:main theorem}
		Assume $p,q \in (1,+\infty)$, $A,B \in (0, +\infty)$ and suppose that $F \in L^p(\R^{2d}) \cap L^q(\R^{2d})$ satisfies the following constraints:
		\begin{equation}\label{eq: constraints}
			\|F\|_p \leq A, \quad \|F\|_q \leq B.
		\end{equation}
		Then
		\begin{enumerate}[label=(\roman*)]
			\item\label{th:main theorem i} If $B/A \geq \kappa_p^{d(\frac{1}{q}-\frac{1}{p})} \left(\dfrac{p}{q}\right)^{\frac{d}{q}}$ $\left(\text{respectively\ } B/A \leq \kappa_q^{d(\frac{1}{q}-\frac{1}{p})} \left(\dfrac{p}{q}\right)^{\frac{d}{p}}\right)$, then:
			\begin{equation}\label{eq:estimate Riccardi first case dimension d}
				\|L_{F,\varphi}\| \leq \kappa_p^{d \kappa_p} A \quad (\text{resp.\ } \|L_{F,\varphi}\| \leq \kappa_q^{d \kappa_q} B),
			\end{equation}
			with equality if and only if, for some $\theta \in \R$ and some $z_0 \in \R^{2d}$,
			\begin{equation*}
				F(z) = e^{i \theta} \lambda e^{-\frac{\pi}{p-1}|z-z_0|^2} \quad (\text{resp.\ } F(z) = e^{i \theta} \lambda e^{-\frac{\pi}{q-1}|z-z_0|^2}),
			\end{equation*}
			where $\lambda = \kappa_p^{-d/p}A$ (resp. $\lambda = \kappa_q^{-d/q}B$).
			\item\label{th:main theorem ii} If $\kappa_q^{d(\frac{1}{q}-\frac{1}{p})} \left(\dfrac{p}{q}\right)^{\frac{d}{p}} < B/A < \kappa_p^{d(\frac{1}{q}-\frac{1}{p})} \left(\dfrac{p}{q}\right)^{\frac{d}{q}}$, then
			\begin{equation}\label{eq:estimate Riccardi second case dimension d}
				\|L_{F,\varphi}\| \leq \int_0^{+\infty} G(u(t)) \,dt,
			\end{equation}
			where 
			\begin{equation}\label{eq:optimal distribution function}
				u(t) = \frac{1}{d!} \left[ \Log (\lambda_1 t^{p-1} + \lambda_2 t^{q-1})\right]^d
			\end{equation}
			and $\lambda_1, \lambda_2 > 0$ are uniquely determined by
			\begin{equation}\label{eq:equations for multipliers}
				p \int_0^{+\infty} t^{p-1} u(t) \,dt = A^p, \quad q \int_0^{+\infty} t^{q-1} u(t) \,dt = B^q.
			\end{equation}
			Moreover, letting $T > 0$ the unique value such that $\lambda_1 T^{p-1} + \lambda_2 T^{q-1} = 1$, the function $t \mapsto -\log(\lambda_1 t^{p-1} + \lambda_2 t^{q-1})$ defined on $(0,T]$ is invertible and we denote by $\psi : [0, + \infty) \rightarrow (0,T]$ its inverse. Then, equality in \eqref{eq:estimate Riccardi second case dimension d} is achieved if and only if, for some $\theta \in \R$ and some $z_0 \in \R^{2d}$, $F(z) = e^{i \theta} \psi(\pi |z-z_0|^2)$.
		\end{enumerate}
		Finally, if $F$ achieves equality in \eqref{eq:estimate Riccardi first case dimension d} or \eqref{eq:estimate Riccardi second case dimension d} then $|\langle L_{F, \varphi} f, g \rangle| = \|L_{F, \varphi}\|$ for some normalized $f, g \in L^2(\R^d)$ if and only if are both of the kind
		\begin{equation}\label{eq: optimal function Nicola-Tilli Faber-Krahn for STFT dimension d}
			x \mapsto c e^{2 \pi i \omega \cdot t} \varphi (x - x_0), \quad x \in \R^d,
		\end{equation}
		possibly with different $c$'s but same $(x_0, \omega_0) \in \R^{2d}$.
	\end{teo}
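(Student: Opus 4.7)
The plan is to reduce the operator-norm bound to a one-dimensional variational problem over distribution functions, solve it via Lagrange multipliers, and read off both the sharp constant and the extremizers. First, from $|\langle L_{F,\varphi}f,g\rangle| \leq \int |F|\,|\V_\varphi f|\,|\V_\varphi g|$ followed by Cauchy--Schwarz, one has $\|L_{F,\varphi}\| \leq \|L_{|F|,\varphi}\|$, so I may assume $F \geq 0$ and use $\|L_{F,\varphi}\| = \sup_{\|f\|_2 = 1} \int F(z) |\V_\varphi f(z)|^2\, dz$. Writing $u = |\V_\varphi f|^2$ and applying the Hardy--Littlewood rearrangement inequality gives $\int F u \leq \int_0^\infty F^*(s) u^*(s)\, ds$. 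The essential ingredient is the spectrogram Faber--Krahn inequality of Nicola--Tilli: for every normalized $f$, $\int_0^s u^*(\tau)\, d\tau \leq G(s)$, with equality only when $f$ is of the form \eqref{eq: optimal function Nicola-Tilli Faber-Krahn for STFT dimension d} and the level set is a ball around the phase-space center of $f$. Representing $F^*$ by layer cake yields
\begin{equation*}
\|L_{F,\varphi}\| \leq \int_0^\infty G(\mu_F(t))\, dt, \qquad \mu_F(t) = |\{F > t\}|.
\end{equation*}

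Next I would solve the following variational problem: maximize $\int_0^\infty G(\mu(t))\, dt$ over non-increasing $\mu : [0,\infty) \to [0,\infty]$ subject to $p\int_0^\infty t^{p-1}\mu(t)\, dt \leq A^p$ and $q\int_0^\infty t^{q-1}\mu(t)\, dt \leq B^q$ (the layer-cake forms of the $L^p$ and $L^q$ constraints on $F$). Since $G'(s) = e^{-(d!s)^{1/d}}$ is decreasing, $G$ is concave, and a pointwise Lagrange argument gives the first-order condition $G'(\mu(t)) = \lambda_1 t^{p-1} + \lambda_2 t^{q-1}$ whenever the right-hand side lies in $(0,1]$, with $\mu(t) = 0$ otherwise. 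Inverting $G'$ produces formula \eqref{eq:optimal distribution function} and the threshold $T$ as in the statement. The dichotomy between \ref{th:main theorem i} and \ref{th:main theorem ii} is whether the system \eqref{eq:equations for multipliers} is solved with both multipliers strictly positive or just one: in the latter case the other constraint is slack and the problem degenerates to Lieb's inequality (Theorem~\ref{th:Lieb's inequality}) in $L^p$ or $L^q$, whose sharp constant is $\kappa_p^{d\kappa_p} A$ or $\kappa_q^{d\kappa_q} B$ and whose extremizer is the Gaussian given in case \ref{th:main theorem i}. The explicit thresholds on $B/A$ come from computing $\|F\|_q / \|F\|_p$ for exactly those Gaussian extremizers, a direct calculation.

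For the equality characterization I would track saturation in each step: equality in Hardy--Littlewood requires $F$ and $u$ to be concentric radial-decreasing profiles; Carlen's rigidity in the spectrogram Faber--Krahn inequality forces $u$ to be a Gaussian spectrogram (hence $f,g$ of the form \eqref{eq: optimal function Nicola-Tilli Faber-Krahn for STFT dimension d} with the common center $(x_0,\omega_0) = z_0$); and the Lagrange first-order condition pins down $F(z) = e^{i\theta}\psi(\pi|z-z_0|^2)$, while the Cauchy--Schwarz reduction to $F \geq 0$ leaves exactly the overall constant phase $e^{i\theta}$. The main obstacle I anticipate is the clean gluing of cases at the thresholds: one must verify that as $\lambda_2 \downarrow 0$ (resp.\ $\lambda_1 \downarrow 0$) the bound \eqref{eq:estimate Riccardi second case dimension d} degenerates continuously to $\kappa_p^{d\kappa_p} A$ (resp.\ $\kappa_q^{d\kappa_q} B$) and that the boundary values of $B/A$ match the explicit expressions $\kappa_p^{d(1/q-1/p)}(p/q)^{d/q}$ and $\kappa_q^{d(1/q-1/p)}(p/q)^{d/p}$; this requires a monotonicity analysis of the moment map $(\lambda_1,\lambda_2) \mapsto (A^p,B^q)$ to guarantee uniqueness of the multipliers, together with the explicit evaluation of Gaussian $L^p$ and $L^q$ norms to identify the thresholds.
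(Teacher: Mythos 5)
Your high-level plan is correct and follows the same overall structure as the paper: reduce to the Nicola--Tilli distribution-function bound $\|L_{F,\varphi}\|\le\int_0^\infty G(\mu(t))\,dt$ (your Faber--Krahn + Hardy--Littlewood + layer-cake derivation of this is precisely how the cited theorem is proved in \cite{nicolatilli_norm}, which the paper simply invokes as Theorem~\ref{th: norm limitation}), then solve the resulting variational problem over distribution functions with $L^p$ and $L^q$ moment constraints. Your observation that $G$ is concave (since $G'(s)=e^{-(d!s)^{1/d}}$ decreases) is a real asset the paper does not exploit: once a candidate $u$ with $G'(u(t))=\lambda_1t^{p-1}+\lambda_2t^{q-1}$, $\lambda_1,\lambda_2>0$, is known to satisfy both constraints with equality, concavity gives $G(\mu)-G(u)\le G'(u)(\mu-u)$ pointwise and hence $I(\mu)\le I(u)$ for any competitor, avoiding the paper's separate existence argument (Proposition~\ref{prop:existence of maximizer}) and the density/projection machinery used to justify the Euler--Lagrange condition. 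You would need to handle $t\ge T$ carefully (there $u=0$, $G'(0)=1\le\lambda_1t^{p-1}+\lambda_2t^{q-1}$, and the sign still works), but this is a clean shortcut.

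The genuine gap is the one you flag but do not close: proving that, in the intermediate regime \eqref{intermediate regime}, there actually exist $\lambda_1,\lambda_2>0$ making both moment equations \eqref{eq:equations for multipliers} hold with equality, and that this pair is unique. This is the technical core of the paper's Theorem~\ref{nonstandard variational problem solution theorem} -- roughly half its proof. The paper gets existence indirectly (compactness via Helly's theorem yields a maximizer, an $L^2$-orthogonality variation yields the multiplier form, separate variations rule out $\lambda_i=0$ or $\lambda_i<0$, and a boundary variation argument proves $M=T$), and then proves uniqueness by parametrizing $u$ by $(c_1,c_2)$ and showing via the implicit function theorem and a symmetrization of a Jacobian integral that the level sets $\{f=A^p\}$ and $\{g=B^q\}$ cross exactly once. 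Writing ``a monotonicity analysis of the moment map'' is a correct diagnosis but is not a proof; without it your argument neither produces the candidate $u$ nor shows that the threshold dichotomy in $B/A$ is exhaustive. Two smaller points: your ``pointwise Lagrange argument'' needs to be made rigorous (admissible pointwise variations of a non-increasing function are constrained; either argue via $\mathcal{C}'$ as the paper does, or rely entirely on the concavity comparison, which requires no variations at all); and the equality chain (Hardy--Littlewood rigidity + Faber--Krahn rigidity forcing every superlevel set of $F$ to be a ball with a common center, pinning down $F(z)=e^{i\theta}\psi(\pi|z-z_0|^2)$ and $f,g$ of the form \eqref{eq: optimal function Nicola-Tilli Faber-Krahn for STFT dimension d}) is sketched only at the level of intent.
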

	We split the proof of the theorem in several steps and we start proving the first statement, which explains how these different regimes arise.
	\begin{proof}[Proof of Theorem \ref{th:main theorem}\ref{th:main theorem i}]
		We consider just the first version, since the other one can be obtained swapping $p$ and $q$.
		
		From the $d$-dimensional version of Theorem \ref{th:Lieb's inequality} (which can be found in \cite{nicolatilli_norm}) it is straightforward to see that
		\begin{equation*}
			\| L_{F, \varphi}\| \leq \min\{\kappa_p^{d\kappa_p}A, \, \kappa_q^{d\kappa_q}B\}.
		\end{equation*}
		Supposing that the first term is smaller than the second, which means:
		\begin{equation}\label{eq:condition B/A first}
			\kappa_p^{d\kappa_p}A \leq \kappa_q^{d\kappa_q}B \iff \dfrac{B}{A} \geq \left(\dfrac{\kappa_p^{\kappa_p}}{\kappa_q^{\kappa_q}}\right)^d,
		\end{equation}
		we want to check whether an optimal weight function for the problem with just the $L^p$ constraint, namely:
		\begin{equation*}
			F(z) = A \kappa_p^{-\frac{d}{p}} e^{i \theta} e^{-\frac{\pi}{p-1}|z-z_0|^2}, \quad z \in \R^{2d},
		\end{equation*}
		satisfies also the $L^q$ constraint. From a direct computation one obtains
		\begin{align*}
			\| F \|_q &= A \kappa_p^{-\frac{d}{p}} \left(\dfrac{p-1}{q}\right)^{\frac{d}{q}},
		\end{align*}
		hence $F$ satisfies also the $L^q$ constraint if and only if $\|F\|_q \leq A$, which is equivalent to
		\begin{equation}\label{eq:condition B/A second}
			\dfrac{B}{A} \geq \kappa_p^{d\left(\frac1q - \frac1p\right)}\left(\dfrac{p}{q}\right)^{\frac{d}{q}}.
		\end{equation}
		If this condition is met, the solution with just the $L^p$ constraint is a solution also for the problem with two constraints. Moreover, from Theorem \ref{th:Nicola-Tilli norm} follows also the last part of the statement regarding those $f$ and $g$ that achieve equality in $|\langle L_{F,\varphi} f, g \rangle| = \|L_{F,\varphi}\|$.
	\end{proof}
	If condition \eqref{eq:condition B/A second} were less restrictive than condition \eqref{eq:condition B/A first} we would have completely solved the problem. Unfortunately, this is not the case. Indeed it is always true, regardless of $p$ and $q$, that
	\begin{equation}\label{curious inequality between conjugate exponents original}
		\kappa_p^{d\left(\frac1q - \frac1p\right)}\left(\dfrac{p}{q}\right)^{\frac{d}{q}} \geq \left(\dfrac{\kappa_p^{\kappa_p}}{\kappa_q^{\kappa_q}}\right)^d.
	\end{equation}
	We mention that this inequality can be stated in a rather curious way:
	\begin{equation*}
		\left(\dfrac{p'}{q'}\right)^{\frac1{q'}} \left(\dfrac{p}{q}\right)^{\frac{1}{q}} \geq 1.
	\end{equation*}
	
	Therefore, if $B/A \geq \kappa_p^{d\left(\frac{1}{q}-\frac{1}{p}\right)}\left(\frac{p}{q}\right)^{\frac{d}{q}}$ or $B/A \leq \kappa_q^{d\left(\frac{1}{q}-\frac{1}{p}\right)}\left(\frac{p}{q}\right)^{\frac{d}{p}}$, the problem is already solved and the solution is given by Theorem \ref{th:Lieb's inequality}. Therefore, from now on, we will consider the intermediate case, that is:
	\begin{equation}\label{intermediate regime}
		\kappa_q^{d\left(\frac{1}{q}-\frac{1}{p}\right)} \left(\dfrac{p}{q}\right)^{\frac{d}{p}} < \dfrac{B}{A} < \kappa_p^{d\left(\frac{1}{q}-\frac{1}{p}\right)} \left(\dfrac{p}{q}\right)^{\frac{d}{q}},
	\end{equation}
	which corresponds to the statement of \ref{th:main theorem}\ref{th:main theorem ii}. We notice that the condition is well-posed, since it is actually true that
	\begin{align}\label{another inequality between conjugate exponents original}
		\kappa_q^{d\left(\frac{1}{q}-\frac{1}{p}\right)} \left(\dfrac{p}{q}\right)^{\frac{d}{p}} < \kappa_p^{d\left(\frac{1}{q}-\frac{1}{p}\right)} \left(\dfrac{p}{q}\right)^{\frac{d}{q}}
	\end{align}
	whenever $p \neq q$.
	
	The starting point to prove \ref{th:main theorem}\ref{th:main theorem ii} is a Theorem from \cite{nicolatilli_norm} which gives a bound for $\| L_{F, \varphi} \|$ in terms of the distribution function of $|F|$.
	\begin{teo}\label{th: norm limitation}
		Assume $F \in L^p(\R^{2d})$ for some $p \in [1,+\infty)$ and let $\mu(t) = |\{|F|>t\}|$ be the distribution function of $|F|$. Then
		\begin{equation}\label{norm limitation formula}
			\| L_{F, \varphi} \| \leq \int_0^{+\infty} G(\mu(t))\,dt.
		\end{equation}
		Equality occurs if and only if $F(z)=e^{i\theta}\rho(|z-z_0|)$ for some $\theta \in \R$, $z_0 \in \R^{2d}$ and some nonincreasing function $\rho : [0,+\infty) \rightarrow [0,+\infty)$. In this case, it holds $|\langle L_{F,\varphi}f,g \rangle| = \|L_{F,\varphi}\|$ for some normalized Gaussians $f$ and $g$ of the kind \eqref{eq: optimal function Nicola-Tilli Faber-Krahn for STFT dimension d}, possibly with different $c$'s but with same centre $z_0 = (x_0,\omega_0) \in ~ \R^{2d}$.
	\end{teo}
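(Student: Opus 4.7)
The plan is to reduce the bound on $\|L_{F,\varphi}\|$ to the Faber--Krahn--type inequality for the short-time Fourier transform due to Nicola and Tilli. Starting from
\begin{equation*}
\langle L_{F,\varphi} f, g\rangle = \int_{\R^{2d}} F(z)\, \V_{\varphi} f(z)\, \overline{\V_{\varphi} g(z)}\, dz,
\end{equation*}
I would apply the pointwise inequality $|ab| \leq \tfrac{1}{2}(|a|^2 + |b|^2)$ to obtain
\begin{equation*}
|\langle L_{F,\varphi} f, g\rangle| \leq \tfrac{1}{2}\int_{\R^{2d}} |F(z)|\bigl(|\V_{\varphi} f(z)|^2 + |\V_{\varphi} g(z)|^2\bigr)\, dz.
\end{equation*}
This linearizes the estimate in each of the two spectrograms, which is exactly the format in which Faber--Krahn can be applied.

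Next I would use the layer-cake representation $|F|(z) = \int_0^{+\infty} \mathbf{1}_{\{|F|>t\}}(z)\, dt$ and Fubini to rewrite the right-hand side as
\begin{equation*}
\tfrac{1}{2}\int_0^{+\infty}\biggl(\int_{\{|F|>t\}} |\V_{\varphi} f|^2\, dz + \int_{\{|F|>t\}} |\V_{\varphi} g|^2\, dz\biggr)\, dt.
\end{equation*}
For every Borel set $E \subset \R^{2d}$ and every normalized $h \in L^2(\R^d)$, the Faber--Krahn inequality for the STFT gives $\int_E |\V_{\varphi} h|^2\, dz \leq G(|E|)$ with $G$ as in \eqref{eq: definition G}. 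Applying it to $f$ and to $g$ with $E = \{|F|>t\}$, whose measure is $\mu(t)$, and then taking the supremum over normalized $f,g$, yields \eqref{norm limitation formula}.

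For the equality statement I would trace the three inequalities used. The pointwise bound $|ab| \leq \tfrac{1}{2}(|a|^2 + |b|^2)$ saturates only when $|\V_{\varphi} f| = |\V_{\varphi} g|$ a.e., and the passage from the oriented integral to its absolute value forces the phase of $F(z)\,\V_{\varphi} f(z)\,\overline{\V_{\varphi} g(z)}$ to be a.e.\ constant. Faber--Krahn equality must then hold for almost every level $t$; by the Nicola--Tilli rigidity, this forces $f$ and $g$ to be Gaussians of the form \eqref{eq: optimal function Nicola-Tilli Faber-Krahn for STFT dimension d}, each superlevel set $\{|F|>t\}$ to be a Euclidean ball, and the centre of each such ball to coincide with the common phase-space centre $z_0$ of the two Gaussians. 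Together with the phase alignment, this pins down $F(z) = e^{i\theta}\rho(|z-z_0|)$ for some nonincreasing $\rho$.

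The hardest part, I expect, is the equality analysis: one needs to propagate the Faber--Krahn rigidity, available a priori only for a.e.\ level $t$, into a \emph{single} common centre $z_0$ that works for all superlevel sets of $|F|$ and for both Gaussians $f$ and $g$ simultaneously. This is the step where the symmetrization via $|ab| \leq \tfrac{1}{2}(|a|^2 + |b|^2)$ pays off, since it couples $f$ and $g$ to the same family of balls through a single spectrogram-type estimate, making it plausible that the centres of $f$, $g$, and $|F|$ must all agree.
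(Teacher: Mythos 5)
The paper does not prove Theorem~\ref{th: norm limitation}; it quotes it verbatim from \cite{nicolatilli_norm}. Your reconstruction is essentially the argument from that reference, and it is correct: bound $|\langle L_{F,\varphi}f,g\rangle|$ by $\int|F|\,|\V_\varphi f|\,|\V_\varphi g|$, symmetrize via $|ab|\le\tfrac12(|a|^2+|b|^2)$, use the layer-cake formula for $|F|$, and apply the Faber--Krahn concentration estimate $\int_E|\V_\varphi h|^2\le G(|E|)$ (from \cite{nicolatilli_fk}) to each superlevel set. For the step you flag as hardest, the closing observation is that the Faber--Krahn rigidity applied to $\int_{\{|F|>t\}}|\V_\varphi f|^2=G(\mu(t))$ yields, for each admissible $t$, that $\{|F|>t\}$ is a ball \emph{and} that $f$ is a time-frequency shifted Gaussian whose phase-space centre coincides with the centre of that ball; since $f$ is a single fixed function, all the ball centres are forced to agree with $f$'s centre, giving the common $z_0$ at once for $f$, $g$, and all superlevel sets of $|F|$. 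Two minor points worth making explicit in a final writeup: the ``only if'' direction uses that the supremum defining $\|L_{F,\varphi}\|$ is attained, which holds because $L_{F,\varphi}$ is compact for $F\in L^p(\R^{2d})$ with $p<\infty$; and the step where $|\V_\varphi f|=|\V_\varphi g|$ implies $f=cg$ (so that the two Gaussians share a centre a priori) uses the uniqueness of phase retrieval for the Bargmann transform, though in your argument this also follows a posteriori from the rigidity itself.
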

	
	In light of the previous Theorem, it is natural to seek for a sharp upper bound for the right-hand side of \eqref{norm limitation formula}. Since this involves the distribution function $|F|$, we shall search this bound between all the possible distribution functions. In order to do so, we need to rephrase constraints \eqref{eq: constraints} in terms of $\mu$. This can be easily done thanks to the ``layer cake'' representation (see, for example, \cite[][Theorem 1.13]{liebloss}):
	\begin{equation*}
		\|F\|_p^p = p \int_0^{+\infty} t^{p-1}|\{|F|>t\}|\,dt.
	\end{equation*}
	Hence, constraints \eqref{eq: constraints} become
	\begin{equation}\label{eq: constraints distribution function}
		p \int_0^{+\infty} t^{p-1} |\{|F|>t\}| \,dt \leq A^p \quad \text{and} \quad q \int_0^{+\infty} t^{q-1} |\{|F|>t\}| \,dt \leq B^q
	\end{equation}
	and we can define the proper space of possible distribution functions
	\begin{equation}\label{eq: distribution function space}
		\mathcal{C} = \{u : (0,+\infty) \rightarrow [0,+\infty) \text{ such that } u \text{ is decreasing and satisfies } \eqref{eq: constraints distribution function}\}.
	\end{equation}
	We have reached the point where our original question is rephrased in the following variational problem:
	\begin{equation}\label{nonstandard variational problem formulation}
		\sup_{v \in \mathcal{C}} I(v) \quad \text{where} \quad I(v) \coloneqq \int_0^{+\infty} G(v(t))\,dt.
	\end{equation}
	Firstly, we shall prove existence of maximizers. The proof closely follows the one made in \cite{nicolatilli_norm}, but the result is slightly different.
	\begin{prop}\label{prop:existence of maximizer}
		The supremum in \eqref{nonstandard variational problem formulation} is finite and it is attained by at least one function $u \in \mathcal{C}$. Moreover, every extremal function $u$ achieves equality in at least one of the constraints \eqref{eq: constraints distribution function}.
	\end{prop}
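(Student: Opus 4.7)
The plan is to carry out the direct method of the calculus of variations, following the approach of \cite{nicolatilli_norm} and adapting it to the two simultaneous constraints that define $\mathcal{C}$. The key preliminary is a universal pointwise envelope for every $v \in \mathcal{C}$: since $v$ is decreasing,
\begin{equation*}
\frac{v(t)\, t^p}{p} \leq \int_0^t s^{p-1} v(s)\, ds \leq \frac{A^p}{p},
\end{equation*}
so $v(t) \leq A^p t^{-p}$ on $(0,+\infty)$, and likewise $v(t) \leq B^q t^{-q}$. Because $G$ is bounded by $G_\infty := \int_0^{+\infty} e^{-(d!\tau)^{1/d}}\, d\tau < +\infty$ and satisfies the trivial bound $G(s) \leq s$, one gets
\begin{equation*}
G(v(t)) \leq h(t) := \min\{G_\infty,\, A^p t^{-p},\, B^q t^{-q}\},
\end{equation*}
and $h$ is integrable on $(0,+\infty)$: it is bounded near the origin and decays at infinity like $t^{-p}$ with $p > 1$. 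This already gives $\sup_{\mathcal{C}} I \leq \|h\|_1 < +\infty$.

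For existence I take a maximizing sequence $\{u_n\} \subset \mathcal{C}$. The envelope above and monotonicity make Helly's selection theorem applicable: passing to a subsequence, $u_n$ converges pointwise almost everywhere to some decreasing limit $u$. Fatou's lemma applied to the two moment integrals shows that $u$ still satisfies \eqref{eq: constraints distribution function}, so $u \in \mathcal{C}$, and dominated convergence with the common majorant $h$ yields $I(u) = \lim I(u_n) = \sup_{\mathcal{C}} I$, proving that $u$ is a maximizer.

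For the saturation statement, suppose toward contradiction that $u$ is extremal but both inequalities in \eqref{eq: constraints distribution function} are strict. Admissible test functions of the form $\epsilon \chi_{[0,1]}$ for small $\epsilon > 0$ produce strictly positive values of $I$, so $\sup_{\mathcal{C}} I > 0$ and hence $u$ is not identically zero. For $\lambda > 1$ sufficiently close to $1$, the scaled function $\lambda u$ still satisfies both constraints and is decreasing, so $\lambda u \in \mathcal{C}$. Since $G'(s) = e^{-(d!s)^{1/d}} > 0$, the function $G$ is strictly increasing, hence $G(\lambda u(t)) > G(u(t))$ on the positive-measure set $\{u > 0\}$, whence $I(\lambda u) > I(u)$, contradicting the extremality of $u$.

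The only delicate step I foresee is producing the uniform integrable envelope required for dominated convergence; once the pointwise bound $v(t) \leq \min\{A^p t^{-p},\, B^q t^{-q}\}$ is in hand, the rest of the argument reduces to standard applications of Helly's theorem, Fatou's lemma, and a scaling perturbation.
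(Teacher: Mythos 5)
Your proof follows the same route as the paper's: pointwise envelope from the constraints plus monotonicity, finiteness of the supremum via an integrable majorant for $G(v)$, Helly's selection theorem for a pointwise limit of a maximizing sequence, Fatou to keep the limit in $\mathcal{C}$, dominated convergence to pass to the limit in $I$, and a multiplicative scaling $\lambda u$ with $\lambda>1$ to force saturation. The only (harmless) differences are cosmetic: you use the envelope $\min\{A^p t^{-p}, B^q t^{-q}\}$ where the paper uses just $A^p t^{-p}$, and you make explicit that $\sup_{\mathcal C} I>0$ so a maximizer is not identically zero, a point the paper leaves implicit.
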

	\begin{proof}
		Considering, for example, the first constraint in \eqref{eq: constraints distribution function}, we see that
		\begin{equation*}
			t^p u(t) = p \int_0^t \tau^{p-1} u(t) \,d\tau \overset{u \text{ decreasing}}{\leq} p \int_0^t \tau^{p-1} u(\tau) \,d\tau \leq A^p,
		\end{equation*}
		hence functions in $\mathcal{C}$ are pointwise bounded by $A^p/t^p$. It is straightforward to verify that $G$ in \eqref{eq: definition G} is increasing, that $G(s) \leq s$ and that $G(s) \leq 1$. Using these properties we have:
		\begin{align*}
			I(u) &= \int_0^{+\infty} G(u(t))\,dt = \int_0^1 G(u(t))\,dt + \int_1^{+\infty} G(u(t))\,dt \overset{G(s) \leq 1}{\leq} 1 + \int_1^{+\infty} G(u(t))\,dt \\
			\overset{G \text{ increasing}}&{\leq} 1 + \int_1^{+\infty} G(A^p/t^p)\,dt \overset{G(s) \leq s}{\leq} 1 + \int_1^{+\infty} \dfrac{A^p}{t^p}\,dt < +\infty,
		\end{align*}
		therefore the supremum in \eqref{nonstandard variational problem formulation} is finite.
		
		Let $\{u_n\}_{n \in \N} \subset \mathcal{C}$ be a maximizing sequence. Since every $u_n$ is pointwise bounded by $A^p/t^p$, thanks to Helly's selection theorem we can say that, up to a subsequence, $u_n$ converges pointwise to a decreasing function $u$. Moreover, $u$ is still in $\mathcal{C}$, indeed:
		\begin{equation*}
			\int_0^{+\infty} t^{p-1} u(t) = \int_0^{+\infty} \lim_{n \rightarrow +\infty} t^{p-1} u_n(t) \,dt \overset{\text{Fatou's lemma}}{\leq} \liminf_{n \rightarrow +\infty} \int_0^{+\infty} t^{p-1} u_n(t) \,dt \leq \dfrac{A^p}{p},
		\end{equation*}
		and clearly the same holds for $q$ instead of $p$.
		
		Now we have to prove that $u$ is actually achieving the supremum. We already saw that the following holds:
		\begin{equation*}
			|G(u_n(t))| \leq \chi_{(0,1)}(t) + \dfrac{A^p}{t^p} \chi_{(1,+\infty)}(t)
		\end{equation*}
		and that the left-hand side is a function in $L^1(0,+\infty)$. This allows us to use dominated convergence theorem to conclude that 
		\begin{equation*}
			I(u) = \int_0^{+\infty} G(u(t)) = \lim_{n \rightarrow +\infty} \int_0^{+\infty} G(u_n(t))\,dt = \lim_{n \rightarrow +\infty} I(u_n) = \sup_{v \in \mathcal{C}} I(v).
		\end{equation*}
		
		Lastly, we need to show that $u$ achieves equality at least in one of the constraints \eqref{eq: constraints distribution function}. Supposing that this is not true, if we let $u_{\varepsilon}(t) = (1+\varepsilon) u(t)$, then for $\varepsilon > 0$ sufficiently small constraints are still satisfied and since $G$ is strictly increasing $I(u_{\varepsilon}) > I(u)$, which contradicts the hypothesis that $u$ is a maximizer.
	\end{proof}
	After proving the existence of maximizers, we shall prove that removing the monotonicity assumption in \eqref{eq: distribution function space} does not change the solution of the variational problem.
	\begin{prop}\label{monotonicity of maximizer}
		Let $\allowdisplaybreaks[1]\mathcal{C}' = \{u : (0,+\infty) \rightarrow [0,+\infty)$ such that  $u$ is measurable and satisfies \eqref{eq: constraints distribution function}$\}$. Then
		\begin{equation}
			\sup_{v \in \mathcal{C}} I(v) = \sup_{v \in \mathcal{C}'} I(v).
		\end{equation}
		In particular, any function $u \in \mathcal{C}$ achieving the supremum on the left-hand side also achieves it on the right-hand side.
	\end{prop}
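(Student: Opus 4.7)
The plan is to pass to the decreasing rearrangement. Given any $u \in \mathcal{C}'$, let $u^* : (0,+\infty) \to [0,+\infty)$ denote its nonincreasing right-continuous rearrangement, so that $u^*$ and $u$ share the same distribution function with respect to Lebesgue measure. I will verify that (a) $u^* \in \mathcal{C}$, i.e. $u^*$ still satisfies \eqref{eq: constraints distribution function}, and (b) $I(u^*) = I(u)$. Combined with the trivial inclusion $\mathcal{C} \subset \mathcal{C}'$, this yields the equality of the two suprema; the ``in particular'' clause then follows at once, since any maximizer in $\mathcal{C}$ produces a value which is automatically the supremum on $\mathcal{C}'$ as well.

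For (b), since $G$ is strictly increasing with $G(0)=0$ (clear from \eqref{eq: definition G}), the layer-cake representation gives
\begin{equation*}
    I(u) = \int_0^{+\infty} |\{t : G(u(t)) > s\}|\, ds = \int_0^{+\infty} |\{u > G^{-1}(s)\}|\, ds,
\end{equation*}
which depends only on the distribution function of $u$. Since $u$ and $u^*$ are equidistributed, $I(u) = I(u^*)$.

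For (a), the key observation is that $t \mapsto t^{p-1}$ is increasing on $(0,+\infty)$, so for any measurable set $E \subset (0,+\infty)$ of finite measure $m$ one has $\int_E pt^{p-1}\, dt \geq \int_0^m pt^{p-1}\, dt = m^p$, the minimum being attained by pushing the mass toward the origin. Applying this to the superlevel sets $E_s = \{u > s\}$, whose measures coincide with $|\{u^* > s\}|$ by construction of $u^*$, Fubini yields
\begin{align*}
    p\int_0^{+\infty} t^{p-1} u(t)\, dt &= \int_0^{+\infty} \int_{E_s} p t^{p-1}\, dt\, ds \\
    &\geq \int_0^{+\infty} \int_{\{u^* > s\}} p t^{p-1}\, dt\, ds = p \int_0^{+\infty} t^{p-1} u^*(t)\, dt.
\end{align*}
The identical computation with $q$ in place of $p$ shows that both constraints in \eqref{eq: constraints distribution function} remain valid (and are in fact tightened) for $u^*$, so $u^* \in \mathcal{C}$.

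The only nontrivial point is the rearrangement-type inequality in (a); everything else is bookkeeping with the layer-cake formula and Fubini. No serious obstacle is expected, as the argument parallels the standard symmetric-decreasing rearrangement proofs specialized to the half-line with the weight $t^{p-1}$.
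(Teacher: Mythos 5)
Your proof is correct and follows essentially the same route as the paper: pass to the decreasing rearrangement $u^*$, show $I(u^*)=I(u)$ via equimeasurability and the layer-cake formula, and show $u^* \in \mathcal{C}$ by noting that $\int_{E} t^{p-1}\,dt \geq \int_0^{|E|} t^{p-1}\,dt$ since $t^{p-1}$ is increasing. The paper phrases the second step in terms of the measure $\nu$ with density $t^{p-1}$ and writes $I(u)=\int_0^{\infty}|\{u>\tau\}|e^{-(d!\tau)^{1/d}}\,d\tau$ in place of your change of variable through $G^{-1}$, but these are cosmetic differences, not a different argument.
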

	\begin{proof}
		Let $u \in \mathcal{C}'$. We define its \emph{decreasing rearrangement} as:
		\begin{equation}
			u^*(s) = \sup\{t \geq 0 : |\{u>t\}|>s\},
		\end{equation}
		with the convention that $\sup \emptyset = 0$. It is clear from the definition that $u^*$ is a non-increasing function. Moreover, one can see that $u^*$ is right-continuous and that $u$ and $u^*$ are \emph{equi-measurable} (\cite[][Section 10.12]{hardy_littlewood_polya}, \cite[][Proposition 1.4.5]{grafakos}), which means that they have the same distribution function. Moreover, we already pointed out that constraints \eqref{eq: constraints distribution function} imply that $u$ is pointwise bounded by $A^p/t^p$, therefore $u^*$ takes only finite values. Our aim is to show that $u^* \in \mathcal{C}$. Letting $\nu$ be the Radon measure with density $t^{p-1}$, we start proving that $\nu(\{u>s\}) \geq \nu(\{u^* > s\})$, indeed:
		\begin{align*}
			\nu(\{u>s\}) &= \int_{\{u>s\}} t^{p-1} \,dt \overset{t^{p-1}\; \mathrm{increasing}}{\geq} \int_0^{|\{u>s\}|} t^{p-1} \,dt  \\
			\overset{\mathrm{equi-measurability}}&{=} \int_0^{|\{u^*>s\}|} t^{p-1} \,dt \overset{u^*\; \mathrm{decreasing}}{\underset{\mathrm{right-continuous}}{=}} \int_{\{u^*>s\}} t^{p-1} \,dt = \nu(\{u^* > s\}).
		\end{align*}
		Then, using one more time the ``layer cake'' representation:
		\begin{align*}
			\int_{0}^{+\infty} t^{p-1} u(t) \,dt &= \int_{0}^{+\infty} u(t) d\nu(t) = \int_{0}^{+\infty} \nu(\{u>s\}) \,ds \geq \\
			&= \int_{0}^{+\infty} \nu(\{u^* > s\}) \,ds = \int_{0}^{+\infty} u^*(t) d\nu(t) = \int_{0}^{+\infty} t^{p-1} u^*(t) \,dt.
		\end{align*}
		If we swap $p$ with $q$ we conclude that $u^* \in \mathcal{C}$. Moreover, always from equi-measurability, we have:
		\begin{align*}
			I(u) &= \int_{0}^{+\infty} G(u(t)) \,dt = \int_{0}^{+\infty} \int_0^{u(t)} e^{-(d!\tau)^{1/d}} \,d\tau \,dt = \int_{0}^{+\infty} \int_{0}^{+\infty} \chi_{\{u>\tau\}}(t) e^{-(d!\tau)^{1/d}} \,d\tau \,dt \\
			\overset{\mathrm{Tonelli}}&{=}  \int_{0}^{+\infty} |\{u > \tau\}| e^{-(d!\tau)^{1/d}} \,d\tau = \int_{0}^{+\infty} |\{u^* > \tau\}| e^{-(d!\tau)^{1/d}} \,d\tau = I(u^*).
		\end{align*}
		Taking the supremum over all possible $u \in \mathcal{C'}$ we have:
		\begin{align*}
			\sup_{v \in \mathcal{C}'} I(v)= \sup_{v \in \mathcal{C}'} I(v^*) \leq \sup_{v \in \mathcal{C}} I(v).
		\end{align*}
		Inequality $\sup_{v \in \mathcal{C}'} I(v) \geq \sup_{v \in \mathcal{C}} I(v)$ is trivial since $\mathcal{C}' \supset \mathcal{C}$.
		
	\end{proof}
	We are now in the position to find maximizers of \eqref{nonstandard variational problem formulation}.
	\begin{teo}\label{nonstandard variational problem solution theorem}
		There exist a unique function $u \in \mathcal{C}$ achieving the supremum in \eqref{nonstandard variational problem formulation} that is:
		\begin{equation}\label{nonstandard variational problem solution formula}
			u(t) = \dfrac{1}{d!} \left[\Log\left(\lambda_1 t^{p-1} + \lambda_2 t^{q-1}\right)\right]^d, \quad t>0
		\end{equation}
		where $\lambda_1, \lambda_2$ are both positive and uniquely determined by
		\begin{equation*}
			p\int_0^{+\infty} t^{p-1}u(t)\,dt = A^p, \quad q\int_0^{+\infty} t^{q-1}u(t)\,dt = B^q.
		\end{equation*}
	\end{teo}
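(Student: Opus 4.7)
The plan is to run a Lagrange multiplier argument, exploiting that $G$ is strictly concave on $[0,+\infty)$. Indeed $G'(s) = e^{-(d!s)^{1/d}}$ is strictly decreasing, so $G$ is strictly concave, making the functional $I$ strictly concave on the convex class $\mathcal{C}$; together with the linearity of the constraints, this forces uniqueness (up to a.e.\ equality) of any maximizer. It therefore suffices to exhibit an explicit candidate $u$ of the form \eqref{nonstandard variational problem solution formula} and verify its optimality via a duality inequality.

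For the optimality step, I would fix parameters $\lambda_1, \lambda_2 > 0$ and consider the pointwise maximization of $s \mapsto G(s) - (\lambda_1 t^{p-1} + \lambda_2 t^{q-1})\, s$ over $s \geq 0$. Using $G'(0) = 1$ and strict concavity, the maximum is attained at $s = 0$ whenever $\lambda_1 t^{p-1} + \lambda_2 t^{q-1} \geq 1$, and otherwise at the unique $s > 0$ solving $G'(s) = \lambda_1 t^{p-1} + \lambda_2 t^{q-1}$; inverting $G'$ through the $\Log$ function then yields exactly the formula \eqref{nonstandard variational problem solution formula}. Integrating in $t$ the resulting pointwise inequality
\begin{equation*}
G(v(t)) - G(u(t)) \leq (\lambda_1 t^{p-1} + \lambda_2 t^{q-1})(v(t) - u(t)),
\end{equation*}
and using that $u$ saturates both constraints in \eqref{eq: constraints distribution function}, produces $I(v) \leq I(u)$ for every $v \in \mathcal{C}$, which is the desired optimality.

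The main obstacle is producing multipliers $\lambda_1, \lambda_2 > 0$ for which both constraints are active. Proposition \ref{prop:existence of maximizer} already supplies a maximizer $u \in \mathcal{C}$ and asserts that at least one constraint is saturated; a standard KKT argument based on first-order variations of $u$ (sign-changing where $u > 0$ and nonnegative where $u = 0$) then forces $u$ to take the form \eqref{nonstandard variational problem solution formula} for some $\lambda_1, \lambda_2 \geq 0$. To rule out the degenerate cases, suppose for instance $\lambda_2 = 0$: then $u$ is the distribution function of the extremal weight for the single-$L^p$-constraint problem, which by Theorem \ref{th:Lieb's inequality} is a specific Gaussian. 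The very same $L^q$-norm computation performed in the proof of Theorem \ref{th:main theorem}\ref{th:main theorem i} shows that this Gaussian violates the $L^q$ constraint precisely in the intermediate regime \eqref{intermediate regime}; an analogous argument excludes $\lambda_1 = 0$. Hence $\lambda_1, \lambda_2 > 0$ with both constraints saturated, and the two equations in \eqref{eq:equations for multipliers} determine the multipliers uniquely, injectivity of the map $(\lambda_1, \lambda_2) \mapsto u_\lambda$ being a consequence of the strict monotonicity of the $\Log$ function together with the linear independence of $t^{p-1}$ and $t^{q-1}$. Uniqueness of $u$ itself is then immediate from strict concavity of $I$.
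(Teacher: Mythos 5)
Your approach is correct, but it departs substantially from the paper's. The paper never invokes concavity: it derives the Euler--Lagrange condition $G'(u)=\lambda_1 t^{p-1}+\lambda_2 t^{q-1}$ by projecting admissible variations onto $\mathrm{span}\{t^{p-1},t^{q-1}\}^\perp$, rules out $\lambda_i\le 0$ with hand-built variations, proves $M=T$ (i.e.\ that $u$ is continuous across its zero set) by solving a $2\times 2$ linear system for a carefully designed perturbation near $t=M$ and passing to the limit $\delta\to 0^+$, and then establishes uniqueness of $(\lambda_1,\lambda_2)$ through a lengthy level-set analysis (implicit function theorem plus a sign computation reducing to $\int_{T_1}H\,\frac{(t^ps^q-t^qs^p)^2}{t^2s^2}>0$). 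Your route replaces almost all of this with two structural observations: $G'$ strictly decreasing $\Rightarrow$ $G$ strictly concave $\Rightarrow$ $I$ strictly concave on the convex set $\mathcal C$ (giving uniqueness of the maximizer at once), and pointwise maximization of the Lagrangian $s\mapsto G(s)-(\lambda_1 t^{p-1}+\lambda_2 t^{q-1})s$ (giving the $\Log$ formula directly, so the continuity issue $M=T$ never arises). The duality inequality then certifies optimality, exactly as in the standard convex-optimization template. This is shorter and arguably cleaner, though it presupposes a KKT existence theorem in an $L^1$-type setting (constraint qualification is needed, but Slater's condition is trivially satisfied here), whereas the paper's variational argument constructs the multipliers by hand.

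One step you should tighten is the uniqueness of the multipliers. Injectivity of $(\lambda_1,\lambda_2)\mapsto u_\lambda$ alone does not show that the system \eqref{eq:equations for multipliers} has at most one solution; a priori two distinct pairs $\lambda\ne\lambda'$ could produce distinct functions $u_\lambda\ne u_{\lambda'}$ that both happen to satisfy the two integral identities. You need the intermediate observation that, by the very duality inequality you set up, \emph{any} positive pair $(\lambda_1,\lambda_2)$ for which $u_\lambda$ saturates both constraints automatically makes $u_\lambda$ a maximizer; strict concavity then forces $u_\lambda=u_{\lambda'}$, and only at that point does injectivity of $\lambda\mapsto u_\lambda$ (which does follow from strict monotonicity of $\Log$ and linear independence of $t^{p-1},t^{q-1}$) give $\lambda=\lambda'$. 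You have all three ingredients on the page, but the order of quantifiers as you have written it reads as though injectivity were doing the whole job, which it is not.
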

	\begin{proof}
		We will split the proof in several parts. Firstly we will show that maximizers are given by \eqref{nonstandard variational problem solution formula}. Then we will show that multipliers $\lambda_1$ and $\lambda_2$ are both strictly positive and unique.
		
		\textit{Expression of maximizers}: Let $M = \sup\{t \in (0,+\infty)\ : u(t) > 0\}$. From Proposition \ref{prop:existence of maximizer} we know that $u$ has to achieve at least one of the constraints, therefore $M>0$. Consider now a closed interval $[a,b] \subset (0,M)$ and a function $\eta \in L^{\infty}(0,M)$ supported in $[a,b]$. Without loss of generality we can suppose that $\eta$ is orthogonal, in the $L^2$ sense, to $t^{p-1}$ and $t^{q-1}$, explicitly
		\begin{equation}\label{variations are orthogonal to constraints}
			\int_a^b t^{p-1} \eta(t) \,dt=0, \quad \int_a^b t^{q-1} \eta(t) \,dt=0.
		\end{equation}
		On $[a,b]$ we have that $u(t) \geq u(b) > 0$, hence, for $|\varepsilon|$ sufficiently small, $u+\varepsilon\eta$ is still a nonnegative function which satisfies \eqref{eq: constraints distribution function}, therefore $u+\varepsilon\eta \in \mathcal{C}'$. Since we are supposing that $u$ is a maximizer, the function $\varepsilon \mapsto I(u+\varepsilon\eta)$ has a maximum for $\varepsilon = 0$. Given that $\eta$ is supported in a compact interval we can differentiate under the integral sign and obtain
		\begin{equation*}
			0 = \dfrac{d}{d\varepsilon}I(u+\varepsilon\eta) \lvert_{\varepsilon=0} = \int_a^b G'(u(t))\eta(t)\,dt.
		\end{equation*}
		We would like to extend this result to every $\eta$ in $L^2(a,b)$ satisfying \eqref{variations are orthogonal to constraints}. Since $L^{\infty}(a,b)$ is dense in $L^2(a,b)$, there exist a sequence $\{\eta_k\}_{k \in \N} \subset L^{\infty}(a,b)$ such that $\eta_k \rightarrow \eta$ in $L^2(a,b)$. We can consider the projection operator $P$ such that, given $\psi \in L^2(a,b)$, $P\psi$ is the orthogonal projection of $\psi$ onto $ X = \mathrm{span}\{t^{p-1},t^{q-1}\}^{\perp} \subset L^2(a,b)$. Considering $P$ is continuous we have that $P\eta_k \rightarrow P\eta = \eta$, hence, since $P\eta_k \in L^{\infty}(a,b)$:
		\begin{equation*}
			0 = \int_a^b G'(u(t)) P\eta_k(t) \,dt = \langle G'(u), P\eta_n \rangle_{L^2(a,b)} \rightarrow \langle G'(u), \eta \rangle_{L^2(a,b)} = \int_a^b G'(u(t)) \eta(t) \,dt
		\end{equation*}
		namely
		\begin{equation}\label{orthogonality of G'}
			\int_a^b G'(u(t)) \eta(t) \,dt = 0. 
		\end{equation}
		Since \eqref{orthogonality of G'} holds for every $\eta \in X$ it must be that 
		\begin{equation*}
			G'(u) \in X^{\perp} = \left(\mathrm{span}\{t^{p-1},t^{q-1}\}^{\perp}\right)^{\perp} = \mathrm{span}\{t^{p-1},t^{q-1}\} \quad \text{in } (a,b).
		\end{equation*}
		Letting $a \rightarrow 0^+$ and $b \rightarrow M^-$ we then obtain
		\begin{equation}\label{expression G'(u)}
			G'(u(t)) = \lambda_1 t^{p-1} + \lambda_2 t^{q-1} \quad \text{for a.e. } t \in (0,M)
		\end{equation}
		for some multipliers $\lambda_1,\lambda_2 \in \R$. Since $u$ is decreasing actually \eqref{expression G'(u)} holds for every $t \in (0,M)$. Finally, recalling the expression of \eqref{eq: definition G} we see that $G'(s) = e^{-(d!s)^{1/d}}$. Since $u$ is monotonically decreasing we can invert \eqref{expression G'(u)} thus obtaining the explicit expression of maximizers:
		\begin{equation}\label{expression u}
			u(t) = \begin{cases}
				\dfrac{1}{d!} \left[-\log\left(\lambda_1 t^{p-1} + \lambda_2 t^{q-1}\right) \right]^d & t \in (0,M)\\
				0 & t \in (M,+\infty)
			\end{cases}
		\end{equation}
		We remark that a priori it was possible that $M=+\infty$, but from the explicit expression of maximizers we see that this is not possible since $u$ has to be nonnegative.
		
		\textit{Maximizers achieve equality in both constraints and multipliers are non-zero}: The argument we used to determine the expression of maximizers enables us to say that these have to achieve equality in both constraints in \eqref{eq: constraints distribution function}. Indeed, if, for example, we had that $q \int_0^{+\infty} t^{q-1}u(t)\,dt < B^q$, the second condition of orthogonality in \eqref{variations are orthogonal to constraints} could be removed, because for sufficiently small $\varepsilon$ a variation non-orthogonal to $t^{q-1}$ would be admissible. This would provide us the solution of the same variational problem but without the $L^q$ constraint. Since we are working in the intermediate case, we know that actually this solution does not satisfy the $L^q$ constraint, hence we conclude that $u$ has to achieve equality in both constraints. With the very same reasoning we can say that neither $\lambda_1$ nor $\lambda_2$ can be 0.
		
		\textit{Multipliers are positive}: Suppose that one of the multipliers, for example $\lambda_2$, is negative. Consider an interval $[a,b] \subset (0,M)$ and an admissible variation $\eta \in L^{\infty}(0,M)$ supported in $[a,b]$ and such that $\int_{a}^{b}t^{q-1}\eta(t)\,dt <0$. An example of such variation can be
		\begin{equation*}
			\eta(t) = \left\{
			\begin{aligned}
				-t^{1-p},\quad & t \in [a, (a+b)/2)\\
				t^{1-p},\quad					   & t \in [(a+b)/2, b]
			\end{aligned}\right.
		\end{equation*}
		if $q < p$ or
		\begin{equation*}
			\eta(t) = \left\{
			\begin{aligned}
				t^{1-p},\quad & t \in [a, (a+b)/2)\\
				-t^{1-p},\quad					   & t \in [(a+b)/2, b]
			\end{aligned}\right.
		\end{equation*}
		if $q>p$. Then the directional derivative of $G$ at $u$ along $\eta$ is:
		\begin{equation*}
			\int_{a}^{b} G'(u(t))\eta(t)\,dt = \int_{a}^{b} (\lambda_1 t^{p-1} + \lambda_2 t^{q-1})\eta(t)\,dt = \lambda_2 \int_{a}^{b} t^{q-1} \eta(t)\,dt > 0,
		\end{equation*}
		which contradicts the fact that $u$ is a maximizer.
		
		\textit{$u$ is continuous}: Now that we now that both multipliers are positive we can prove that $u$ is continuous, which is equivalent to say that $M=T$, where $T$ is the unique positive number such that $\lambda_1 T^{p-1} + \lambda_2 T^{q-1} = 1$ (uniqueness of $T$ follows from the positivity of multipliers).
		
		We start supposing that $M < T$, which means that $\lim_{t \rightarrow M^-} u(t) > 0$. Consider the following variation 
		\begin{equation*}
			\eta(t) = \left\{
			\begin{aligned}
				-1 + \alpha \frac{t}{M} + \beta,\quad & t \in (M-M\delta,M)\\
				1,\quad					   & t \in (M,M+M\delta)\\
				0,\quad				   & \text{otherwise}
			\end{aligned}\right.
		\end{equation*}
		where $\delta>0$ is small enough so that $M-M\delta >0$ and $M+M\delta < T$, while $\alpha$ and $\beta$ are constants, depending on $\delta$, to be determined. Since we want this to be an admissible variation, we impose that $\eta$ is orthogonal to $t^{p-1}$ and $t^{q-1}$. For example, the first condition is:
		\begin{align*}
			0 &= \int_{M-M\delta}^{M+M\delta} t^{p-1} \eta(t) \,dt = -\int_{M-M\delta}^M t^{p-1}\,dt + \int_{M-M\delta}^M t^{p-1}\left(\alpha \frac{t}{M} + \beta\right) \,dt + \int_M^{M+M\delta} t^{p-1}\,dt \\
			\overset{\tau=t/M}&{=}M^p \int_{1-\delta}^1 \tau^{p-1}(\alpha \tau + \beta) \,d\tau - M^p \int_{1-\delta}^1 \tau^{p-1} \,d\tau + M^p \int_1^{1+\delta} \tau^{p-1} \,d\tau
		\end{align*}
		therefore, dividing by $\delta$:
		\begin{align*}
			\fint_{1-\delta}^1 \tau^{p-1}(\alpha \tau + \beta) \,d\tau = \alpha \fint_{1-\delta}^1 \tau^{p} \,d\tau + \beta \fint_{1-\delta}^1 \tau^{p-1} \,d\tau = \fint_{1-\delta}^1 \tau^{p-1} \,d\tau - \fint_1^{1+\delta} \tau^{p-1} \,d\tau.
		\end{align*}
		The equation stemming from the orthogonality with $t^{q-1}$ is analogous. Therefore, we obtained a nonhomogeneous linear system for $\alpha$ and $\beta$:
		\begin{equation}\label{system continuity}
			\begin{pmatrix}
				\fint_{1-\delta}^1 \tau^{p} \,d\tau &   \fint_{1-\delta}^1 \tau^{p-1} \,d\tau\\
				\fint_{1-\delta}^1 \tau^{q} \,d\tau &   \fint_{1-\delta}^1 \tau^{q-1} \,d\tau
			\end{pmatrix}
			\begin{pmatrix}
				\alpha\\
				\beta
			\end{pmatrix}=
			\begin{pmatrix}
				\fint_{1-\delta}^1 \tau^{p-1} \,d\tau - \fint_1^{1+\delta} \tau^{p-1} \,d\tau\\
				\fint_{1-\delta}^1 \tau^{q-1} \,d\tau - \fint_1^{1+\delta} \tau^{q-1} \,d\tau
			\end{pmatrix}.
		\end{equation}
		This system has a unique solution if and only if the determinant of the matrix is not 0. We can show this directly:
		\begin{align*}
			& \fint_{1-\delta}^1 \tau^{p} \,d\tau \fint_{1-\delta}^1 \tau^{q-1} \,d\tau - \fint_{1-\delta}^1 \tau^{q} \,d\tau \fint_{1-\delta}^1 \tau^{p-1} \,d\tau=\\
			&= \dfrac{1}{\delta^2} \int_{(1-\delta,1)^2} \left(\tau^p \sigma^{q-1} - \tau^{p-1}\sigma^q\right) \,d\tau \,d\sigma = \dfrac{1}{\delta^2} \int_{(1-\delta,1)^2} \tau^{p-1} \sigma^{q-1} \left( \tau - \sigma \right) \,d\tau \,d\sigma = \\
			&= \dfrac{1}{\delta^2} \left( \int_{Q_1} \tau^{p-1} \sigma^{q-1} \left( \tau - \sigma \right) \,d\tau \,d\sigma + \int_{Q_2} \tau^{p-1} \sigma^{q-1} \left( \tau - \sigma \right) \,d\tau \,d\sigma \right),
		\end{align*}
		where $Q_1=(1-\delta,1)^2 \cap \{\tau > \sigma\}$ and $Q_2=(1-\delta,1)^2 \cap \{\tau < \sigma\}$. In the second integral we can consider the change of variable that swaps $\tau$ and $\sigma$. In this case, the new domain is $Q_1$, hence:
		\begin{align*}
			&\fint_{1-\delta}^1 \tau^{p} \,d\tau \fint_{1-\delta}^1 \tau^{q-1} \,d\tau - \fint_{1-\delta}^1 \tau^{q} \,d\tau \fint_{1-\delta}^1 \tau^{p-1} \,d\tau\\
			&= \dfrac{1}{\delta^2} \int_{Q_1} \left(\tau^{p-1}\sigma^{q-1} - \tau^{q-1}\sigma^{p-1}\right) \left(\tau - \sigma\right) \,d\tau \,d\sigma.
		\end{align*}
		In $Q_1$ we have that $\tau - \sigma > 0$ and the sign of $\tau^{p-1}\sigma^{q-1} - \tau^{q-1}\sigma^{p-1}$ is constant, indeed:
		\begin{equation*}
			\tau^{p-1}\sigma^{q-1} - \tau^{q-1}\sigma^{p-1} > 0 \iff \left(\dfrac{\tau}{\sigma}\right)^{p-q} > 1 \overset{\tau > \sigma}{\iff} p>q.
		\end{equation*}
		Therefore the determinant of the matrix is always not 0.
		
		Now that we have an admissible variation, we can compute the directional derivative of $G$ along $\eta$. Since $u$ is supposed to be a maximizer, this derivative has to be nonpositive, therefore:
		\begin{align*}
			0 &\geq \int_{M-M\delta}^{M+M\delta} G'(u(t))\eta(t)\,dt = -\int_{M-M\delta}^M \left(\lambda_1 t^{p-1} + \lambda_2 t^{q-1}\right) \,dt\, +\\
			&+ \int_{M-M\delta}^M \left(\lambda_1 t^{p-1} + \lambda_2 t^{q-1}\right)\left(\alpha \dfrac{t}{M}+\beta\right) \,dt + \int_M^{M+M\delta}\,dt = \\
			&= -\int_{M-M\delta}^M \left(\lambda_1 t^{p-1} + \lambda_2 t^{q-1}\right) \,dt + \lambda_1 M^p \int_{1-\delta}^1 t^{p-1}(\alpha t + \beta) \,dt +\\
			&+ \lambda_2 M^q \int_{1-\delta}^1 t^{q-1}(\alpha t + \beta) \,dt + M\delta.
		\end{align*}
		Dividing by $M\delta$ and rearranging we obtain:
		\begin{equation}\label{inequality continuity}
			\begin{aligned}
				\fint_{M-M\delta}^M \left(\lambda_1 t^{p-1} + \lambda_2 t^{q-1}\right) \,dt \geq  1 &+ \lambda_1 M^{p-1} \fint_{1-\delta}^1 t^{p-1}(\alpha t + \beta) \,dt\\ &+ \lambda_2 M^{q-1} \fint_{1-\delta}^1 t^{q-1}(\alpha t + \beta) \,dt.
			\end{aligned}
		\end{equation}
		We notice that the last two terms are exactly the ones that appear in the orthogonality condition, therefore, to understand their behavior as $\delta$ approaches 0, we need to study the right-hand side of the system \eqref{system continuity}. If we expand the first component in the right-hand side of \eqref{system continuity} in its Taylor series with respect to $\delta$ we have:
		\begin{align*}
			\left(1-\dfrac{p-1}{2}\delta + o(\delta) \right) - \left(1+\dfrac{p-1}{2}\delta + o(\delta) \right) = -(p-1)\delta + o(\delta)
		\end{align*}
		and similarly for the other component. If we let $\delta \rightarrow 0^+$ in \eqref{inequality continuity} we obtain
		\begin{align*}
			\lambda_1 M^{p-1} + \lambda_2 M^{q-1} &= \lim_{\delta \rightarrow 0^+} \fint_{M-M\delta}^M \left(\lambda_1 t^{p-1} + \lambda_2 t^{q-1}\right) \,dt \\
			&\geq 1 + \lim_{\delta \rightarrow 0^+} \left[ \lambda_1 M^{p-1} \fint_{1-\delta}^1 t^{p-1}(\alpha t + \beta) \,dt + \lambda_2 M^{q-1} \fint_{1-\delta}^1 t^{q-1}(\alpha t + \beta) \right] \\
			&= 1 + \lambda_1 M^{p-1} \lim_{\delta \rightarrow 0^+} \left[ -(p-1)\delta + o(\delta) \right] + \lambda_2 M^{q-1} \lim_{\delta \rightarrow 0^+} \left[ -(q-1)\delta + o(\delta) \right]\\
			&= 1.
		\end{align*}
		The function $\lambda_1 t^{p-1}  + \lambda_2 t^{q-1}$ is strictly increasing because $\lambda_1$ and $\lambda_2$ are both positive, therefore this implies that $M \geq T$, which is absurd because we supposed that $M<T$. This allows us to write $u$ as in \eqref{nonstandard variational problem solution formula}.
		
		\textit{Uniqueness of multipliers}:	Lastly we shall prove that multipliers $\lambda_1, \lambda_2$, and hence maximizer, are unique. For this proof it is convenient to express $u$ in a slightly different way:
		\begin{equation*}
			u(t) = \dfrac{1}{d!}\left[ \Log\left((c_1t)^{p-1} + (c_2t)^{q-1}\right) \right]^d.
		\end{equation*} 
		To emphasize that $u$ is parametrized by $c_1, c_2$ we write $u(t;c_1,c_2)$. If we let $u$ into \eqref{eq: constraints distribution function} we obtain the following functions of $c_1$ and $c_2$:
		\begin{equation*}
			f(c_1,c_2) = p\ \int_0^T t^{p-1}u(t;c_1,c_2)\,dt, \quad g(c_1,c_2) = q\ \int_0^T t^{q-1}u(t;c_1,c_2)\,dt.
		\end{equation*}
		We want to highlight that, even if it is not explicit, also $T$ depends on $c_1$ and $c_2$. Nevertheless, these functions are differentiable since both $T$ and $u$ are differentiable with respect to $(c_1,c_2)$ and $t^{p-1}u$, $t^{q-1}u$ and their derivatives are bounded in $(0,T)$.\\
		Our maximizer $u$ satisfies the constraints only if $f(c_1,c_2)=A^p$ and $g(c_1,c_2) = B^q$, therefore uniqueness of the maximizer is equivalent to the fact that level sets $\{f=A^p\}$ and $\{g=B^q\}$ intersect only in one point.
		
		The first step is to consider the intersection of level sets with the coordinate axis, namely when one of $c_1$ or $c_2$ is 0. For example, if $c_2=0$ for $f$ we obtain:
		\begin{equation*}
			f(c_{1,f}, 0) = A^p \implies c_{1,f} = \dfrac{\kappa_p^{\frac{d}{p}}}{A}.
		\end{equation*}
		The same can be done for $g$ and setting $c_1=0$ instead of $c_2 = 0$. Thus, we obtain four points:
		\begin{equation*}
			c_{1,f} = \dfrac{\kappa_p^{\frac{d}{p}}}{A},\ c_{1,g} = \left(\dfrac{p-1}{q}\right)^{\frac{d}{q}}\dfrac1{B},\ c_{2,f} = \left(\dfrac{q-1}{p}\right)^{\frac{d}{p}}\dfrac1{A},\ c_{2,g} = \dfrac{\kappa_q^{\frac{d}{q}}}{B}.
		\end{equation*}
		In the regime we are considering one has that $c_{1,f} < c_{1,g}$ and $c_{2,f} > c_{2,g}$, indeed:
		\begin{align*}
			&c_{1,f} < c_{1,g} \iff \dfrac{B}{A} < \kappa_p^{d\left( \frac1{q}-\frac1{p}\right)}\left(\dfrac{p}{q}\right)^{\frac{d}{q}},\\
			&c_{2,f} > c_{2,g} \iff \dfrac{B}{A} > \kappa_q^{d\left( \frac1{q}-\frac1{p}\right)}\left(\dfrac{p}{q}\right)^{\frac{d}{p}},
		\end{align*}
		which are exactly conditions in \eqref{intermediate regime}.
		
		Then, we notice that, for every value $c_1 \in (0,c_{1,f})$, there exists a unique value of $c_2$ for which $f(c_1,c_2) = A^p$. Indeed, from previous computations we notice that $f(c_1,0)$ is a decreasing function hence $f(c_1,0) > A^p$, while $\lim_{c_2 \rightarrow +\infty} f(c_1,c_2) = 0$, therefore from the intermediate value theorem it follows that $f(c_1,c_2) = A^p$ for some $c_2$. The uniqueness of this value follows from strict monotonicity of $f(c_1,\cdot)$, indeed:
		\begin{equation}\label{df/dc1}
			\pfrac{f}{c_1}(c_1,c_2) = -\dfrac{p(p-1)}{(d-1)!}c_1^{p-2}\ \int_0^T \dfrac{t^{2(p-1)}}{(c_1t)^{p-1}+(c_2t)^{q-1}} \left[ -\log\left((c_1t)^{p-1} + (c_2t)^{q-1}\right) \right]^{d-1}\,dt,
		\end{equation}
		is always strictly negative. We point out that the term $\frac{\partial T}{\partial c_1}(c_1,c_2)u(T;c_1,c_2)$, that should appear since $T$ depends on $c_1$, is 0 because $u$ is 0 in $T$. The same is true for $g$, therefore on the interval $(0,c_{1,f})$ the level sets of $f$ and $g$ can be seen as the graph of two functions we denote with $\phi$ and $\gamma$. Moreover, since $f$ and $g$ are both differentiable, from the implicit function theorem we have that $\phi$ and  $\gamma$ are differentiable with respect to $c_1$.
		
		After defining $\phi$ and $\gamma$ we want to study their difference $\phi - \gamma$. First of all, from the intermediate zero theorem we have that $\phi - \gamma$ vanishes in at least one point, since
		\begin{equation*}
			(\phi-\gamma)(0) = c_{2,f} - c_{2,g} > 0, \quad (\phi - \gamma)(c_{1,f}) = -\gamma(c_{1,f}) < 0.
		\end{equation*}
		Then, to prove the uniqueness of the zero we will show that $(\phi-\gamma)' < 0$ whenever $\phi=\gamma$. Again by the implicit function theorem we have
		\begin{equation*}
			\begin{split}
				\dfrac{d}{d c_1}(\phi - \gamma)(c_1) = -\dfrac{\pfrac{f}{c_1}(c_1,\phi(c_1))}{\pfrac{f}{c_2}(c_1,\phi(c_1))}  + 
				\dfrac{\pfrac{g}{c_1}(c_1,\gamma(c_1))}{\pfrac{g}{c_2}(c_1,\gamma(c_1))} < 0 \iff \\
				\mathcal{I}(c_1) = \pfrac{f}{c_1}(c_1,\phi(c_1)) \pfrac{g}{c_2}(c_1,\gamma(c_1)) - \pfrac{f}{c_2}(c_1,\phi(c_1)) \pfrac{g}{c_1}(c_1,\gamma(c_1)) > 0
			\end{split}
		\end{equation*}
		As for \eqref{df/dc1} the other derivatives are computed. To simplify the notation we let 
		\begin{equation*}
			h(t;c_1,c_2) = \frac{1}{(d-1)!}\frac{1}{(c_1t)^{p-1}+(c_2t)^{q-1}}\left[ -\log\left((c_1t)^{p-1} + (c_2t)^{q-1}\right) \right]^{d-1}.
		\end{equation*}
		From Fubini's theorem we have:
		\begin{equation*}
			\begin{split}
				\mathcal{I}(c_1) &=p(p-1)q(q-1)c_1^{p-2}\gamma(c_1)^{q-2} \int_{[0,T]^2} h(t;c_1,\phi(c_1)) h(s;c_1,\gamma(c_1)) t^{2(p-1)} s^{2(q-1)} \,dt\,ds \\ &-p(q-1)q(p-1)c_1^{p-2}\phi(c_1)^{q-2} \int_{[0,T]^2} h(t;c_1,\phi(c_1)) h(s;c_1,\gamma(c_1)) t^{p+q-2} s^{p+q-2} \,dt\,ds.
			\end{split}
		\end{equation*}
		When level sets intersect we have $\phi(c_1)=\gamma(c_1)$. In this situation we can factorize the terms outside the integral and notice that the sign of $\mathcal{I}$ depends only on the sign of
		\begin{equation*}
			\begin{split}
				&\int_{[0,T]^2} h(t;c_1,\phi(c_1)) h(s;c_1,\gamma(c_1))\left( t^{2(p-1)} s^{2(q-1)} - t^{p+q-2} s^{p+q-2} \right)\,dt\,ds \\
				=&\int_{[0,T]^2} h(t;c_1,\phi(c_1)) h(s;c_1,\gamma(c_1)) t^{p-2}s^{q-2}\left( t^p s^q - t^q s^p \right)\,dt\,ds.\\
			\end{split}
		\end{equation*}
		In order to simplify the notation once again, we set $H(t,s;c_1) = h(t;c_1,\phi(c_1)) h(s;c_1,\gamma(c_1))$. Letting $T_1 = [0,T]^2 \cap \{t>s\}$ and $T_2 = [0,T]^2 \cap \{t<s\}$, we can split the above integral in two parts:
		\begin{equation*}
			\begin{split}
				\int_{T_1} H(t,s;c_1)t^{p-2}s^{q-2}\left( t^p s^q - t^q s^p \right)\,dt\,ds + \int_{T_2} H(t,s;c_1)t^{p-2}s^{q-2}\left( t^p s^q - t^q s^p \right)\,dt\,ds.
			\end{split}
		\end{equation*}
		Then, considering the change of variables that swaps $t$ and $s$ in the second integral, the domain of integration becomes $T_1$ and since $H$ is symmetric in $t$ and $s$, we have that the previous quantity is equal to
		\begin{equation*}
			\begin{split}
				&\int_{T_1} H(t,s;c_1)\left( t^{p-2} s^{q-2} - t^{q-2} s^{p-2} \right) \left( t^p s^q - t^q s^p \right)\,dt\,ds\\
				&= \int_{T_1} H(t,s;c_1)\dfrac{1}{t^2 s^2}\left( t^p s^q - t^q s^p \right)^2 \,dt\,ds,
			\end{split}
		\end{equation*}
		which is strictly positive, as desired.
		
		We are now in the position to prove the uniqueness of multipliers. First of all, since $(\phi-\gamma)'<0$ whenever $\phi(c_1) = \gamma(c_1)$, for every intersection point there exists $\delta>0$ such that $\phi(t) > \gamma(t)$ for $t \in (c_1 - \delta, c_1)$ and $\phi(t) < \gamma(t)$ for $t \in (c_1, c_1 + \delta)$.\\
		Define $c_1^* \coloneqq \sup \{c_1 \in [0,c_{1,f}] : \forall t \in [0,c_1] \ \phi(t) \geq \gamma(t)\}$. This is an intersection point between $\phi$ and $\gamma$ (if $\phi(c_1^*) > \gamma(c_1^*)$ due to continuity there would be $\varepsilon > 0$ such that $\phi(c_1^*+\varepsilon) > \gamma(c_1^*+\varepsilon)$ which contradicts the definition of $c_1^*$) and it is the first one, because we saw that after every intersection point there is an interval where $\phi < \gamma$. Lastly, since $\phi(0) > \gamma(0)$ and $\phi(c_{1,f}) = 0 < \gamma(c_1,f)$, we have that $0 < c_1^* < c_{1,f}$.\\
		Suppose now that there is a second point of intersection $\tilde{c}_1$ after the first one. Since immediately after $c_1^*$ we have that $\phi$ becomes smaller than $\gamma$, this second point of intersection is given by $\tilde{c}_1 = \sup \{c_1 \in [c_1^*,c_{1,f}] : \forall t \in [c_1^*,c_1] \ \phi(t) \leq \gamma(t)\}$. Considering that this is an intersection point, there exists an interval before $\tilde{c}_1$ where $\phi$ is strictly greater than $\gamma$ which is absurd, hence $c_1^*$ is the only intersection point between $\phi$ and $\gamma$.\\
		Therefore, the pair $(c_1^*, \phi(c_1^*) = c_2^*)$ is the unique pair of multipliers for which 
		\begin{equation*}
			p\int_0^T t^{p-1} u(t;c_1^*,c_2^*)\,dt = A^p, \quad q\int_0^T t^{q-1} u(t;c_1^*,c_2^*)\,dt = B^q
		\end{equation*}
		and, in the end, $u(t;c_1^*,c_2^*)$ is the unique maximizer for \eqref{nonstandard variational problem formulation}.
	\end{proof}
	We are now in the position to prove the second part of Theorem \eqref{th:main theorem}.
	\begin{proof}[Proof of Theorem \ref{th:main theorem}\ref{th:main theorem ii}]
		We recall that from Theorem \ref{th: norm limitation} we have
		\begin{equation*}
			\|L_{F, \varphi} \| \leq \int_{0}^{+\infty} G(\mu(t))\,dt,
		\end{equation*}
		where $\mu(t) = |\{|F|>t\}|$ is the distribution function of $F$ and equality is achieved if and only if $F(z) = e^{i \theta} \rho(|z-z_0|)$ for some $\theta \in \R$, some $z_0 \in \R^{2d}$ and some non-increasing function $\rho : [0, +\infty) \rightarrow [0, +\infty)$. Then, Theorem \ref{nonstandard variational problem solution theorem} gives us a bound on the right-hand side, namely that:
		\begin{equation*}
			\int_0^{+\infty} G(\mu(t))\,dt \leq \int_0^{+\infty} G(u(t)) \,dt,
		\end{equation*}
		where $u$ is given by \eqref{nonstandard variational problem solution formula}. This is sufficient to prove \eqref{eq:estimate Riccardi second case dimension d}. Then, from $u$ we can reconstruct $\rho$. If $F(z) = e^{i \theta} \rho (|z-z_0|)$, then its super-level sets are balls with centre $z_0$. Since $u$ is the distribution function of $F$, for $t \in (0,T)$ and some radius $r>0$ we have:
		\begin{equation*}
			\dfrac{\pi^d r^{2d}}{d!} = u(t) = \dfrac{1}{d!}\left[ - \log(\lambda_1 t^{p-1} + \lambda_2 t^{q-1})\right]^d,
		\end{equation*}
		where the left-hand side is the measure of a 2$d$-dimensional ball of radius $r$. If we simplify this expression we obtain:
		\begin{equation*}
			\pi r^2 = -\log(\lambda_1 t^{p-1} + \lambda_2 t^{q-1}) \implies t = \psi(\pi r^2).
		\end{equation*}
		However, it is clear that $\rho(r) = t$, therefore $\rho(r) = \psi(\pi r^2)$ and, in the end, $F(z) = e^{i\theta} \psi(\pi |z-z_0|^2)$.
		
		Lastly, from Theorem \ref{th: norm limitation} we have that equality in $|\langle L_{F,\varphi} f, g \rangle| = \|L_{F,\varphi}\|$ is achieved for some normalized $f$ and $g$ if they both are of the kind \eqref{eq: optimal function Nicola-Tilli Faber-Krahn for STFT dimension d}, possibly with different $c$'s but with the same centre $(x_0, \omega_0) = z_0 \in \R^{2d}$.
	\end{proof}

	\section{Some corollaries}\label{sec:Some corollaries}

	In this section, we present some corollaries of Theorem \ref{th:main theorem}. The first one just consists in rephrasing Theorem \ref{th:main theorem} in the form of an optimal estimate for $\|L_{F,\varphi}\|$.
	\begin{cor}
		Let $p, q \in (1,+\infty)$ and let $F \in L^p(\R^{2d}) \cap L^q(\R^{2d})$. Then:
		\begin{enumerate}[label*=(\roman{*})]
			\item If $\|F\|_q / \|F\|_p \leq \kappa_q^{d(\frac{1}{q}-\frac{1}{p})} \left(\dfrac{p}{q}\right)^{\frac{d}{p}}$ then
			\begin{equation*}
				\|L_{F,\varphi}\| \leq \kappa_q^{d \kappa_q} \|F\|_q
			\end{equation*}
			with equality if and only if
			\begin{equation*}
				F(z) = A e^{i \theta} e^{-\frac{\pi}{q-1}|z-z_0|^2},
			\end{equation*}
			for some $A > 0$, $\theta \in \R$ and $z_0 \in \R^{2d}$.
			\item If $\kappa_q^{d(\frac{1}{q}-\frac{1}{p})} \left(\dfrac{p}{q}\right)^{\frac{d}{p}} < \|F\|_q / \|F\|_p < \kappa_p^{d(\frac{1}{q}-\frac{1}{p})} \left(\dfrac{p}{q}\right)^{\frac{d}{q}}$
			then
			\begin{equation*}
				\|L_{F,\varphi}\| \leq \int_0^{+\infty} G(u(t)) \,dt,
			\end{equation*}
			where $u(t) = \frac{1}{d!} \left[ \Log (\lambda_1 t^{p-1} + \lambda_2 t^{q-1})\right]^d$ and $\lambda_1, \lambda_2 > 0$ are uniquely determined by
			\begin{equation*}
				p \int_0^{+\infty} t^{p-1} u(t) \,dt = \|F\|_p^p, \quad q \int_0^{+\infty} t^{q-1} u(t) \,dt = \|F\|_q^q.
			\end{equation*}
			Moreover, letting $T > 0$ the unique value such that $\lambda_1 T^{p-1} + \lambda_2 T^{q-1} = 1$, the function $t \mapsto -\log(\lambda_1 t^{p-1} + \lambda_2 t^{q-1})$ defined on $(0,T]$ is invertible and we denote by $\psi : [0, + \infty) \rightarrow (0,T]$ its inverse. Then, equality is achieved if and only if $F(z) = e^{i \theta} \psi(\pi |z-z_0|^2)$ for some $\theta \in \R$ and $z_0 \in \R^{2d}$.
			
			\item If $\|F\|_q / \|F\|_p \geq \kappa_p^{d(\frac{1}{q}-\frac{1}{p})} \left(\dfrac{p}{q}\right)^{\frac{d}{q}}$ then
			\begin{equation*}
				\|L_{F,\varphi}\| \leq \kappa_p^{d \kappa_p} \|F\|_p
			\end{equation*}
			with equality if and only if
			\begin{equation*}
				F(z) = A e^{i \theta} e^{-\frac{\pi}{p-1}|z-z_0|^2},
			\end{equation*}
			for some $A > 0$, $\theta \in \R$ and $z_0 \in \R^{2d}$.
		\end{enumerate}
	\end{cor}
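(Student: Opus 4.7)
The proof is essentially a translation of Theorem \ref{th:main theorem} obtained by the choice $A := \|F\|_p$ and $B := \|F\|_q$, under which the constraints \eqref{eq: constraints} are automatically saturated. The three cases of the corollary correspond verbatim to the three regimes there: case (i) to the second version of \ref{th:main theorem i} (the hypothesis $\|F\|_q/\|F\|_p \leq \kappa_q^{d(1/q-1/p)}(p/q)^{d/p}$ matches the condition on $B/A$), case (iii) to the first version (with $B/A \geq \kappa_p^{d(1/q-1/p)}(p/q)^{d/q}$), and case (ii) to \ref{th:main theorem ii}.

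For cases (i) and (iii) the bounds $\|L_{F,\varphi}\| \leq \kappa_q^{d\kappa_q}\|F\|_q$ and $\|L_{F,\varphi}\| \leq \kappa_p^{d\kappa_p}\|F\|_p$ are read off directly from \eqref{eq:estimate Riccardi first case dimension d}. For the equality characterization, one just notices that the parameter $\lambda$ in the theorem's Gaussian optimizer absorbs into the free amplitude $A>0$ appearing in the corollary's statement, so every Gaussian of the prescribed form occurs. The one point that deserves a line of verification is that such a Gaussian has a specific ratio $\|F\|_q/\|F\|_p$: a direct computation using $\|e^{-\pi c|z|^2}\|_r = (rc)^{-d/r}$ shows that in case (iii) this ratio is exactly $\kappa_p^{d(1/q-1/p)}(p/q)^{d/q}$ and in case (i) it is $\kappa_q^{d(1/q-1/p)}(p/q)^{d/p}$, so the equality cases are compatible with (and sit at the boundary of) the hypotheses $\geq$ and $\leq$ respectively.

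For case (ii) one simply substitutes $A^p = \|F\|_p^p$ and $B^q = \|F\|_q^q$ in \eqref{eq:equations for multipliers}; by Theorem \ref{nonstandard variational problem solution theorem} these equations uniquely determine $\lambda_1,\lambda_2 > 0$, and both the bound \eqref{eq:estimate Riccardi second case dimension d} and the equality characterization $F(z) = e^{i\theta}\psi(\pi|z-z_0|^2)$ are inherited verbatim from Theorem \ref{th:main theorem}\ref{th:main theorem ii}. Since the corollary is a pure reformulation of the main theorem with the constraint constants identified with the norms of $F$, there is no real obstacle: the entire argument is a routine substitution plus the one-line Gaussian-integral check mentioned above.
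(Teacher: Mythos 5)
Your proposal is correct and takes exactly the approach the paper intends: the paper itself offers no proof beyond the remark that the corollary ``just consists in rephrasing Theorem \ref{th:main theorem},'' and your substitution $A = \|F\|_p$, $B = \|F\|_q$ is precisely that rephrasing. The extra observation that the Gaussian extremizers have $\|F\|_q/\|F\|_p$ exactly at the threshold --- so the equality statements in cases (i) and (iii) are nonvacuous only at the boundary and vacuously true in the strict interior --- is a worthwhile consistency check that the paper omits.
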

	Before presenting a further consequence of Theorem \ref{th:main theorem}, we recall that $\V_{\varphi} : L^2(\R^d) \rightarrow L^2(\R^{2d})$ is an isometry since $\| \varphi \|_2 = 1$ (see \cite{grochenig}). Therefore, $|\V_{\varphi} f(x,\omega)|^2$ can be regarded as the time-frequency energy density of $f$ and is thus important to have some estimates for this quantity, both from a mathematical and physical point of view. We also recall (see \cite{bergh}) that, if $p,q \in (1,+\infty)$, the space $L^p(\R^d) + L^q(\R^d)$ is defined as
	\begin{equation*}
		L^p(\R^d) + L^q(\R^d) = \{f+g \colon f \in L^p(\R^d),\, g \in L^q(\R^d)\}.
	\end{equation*}
	This is a Banach space with the norm
	\begin{equation*}
		\|f\|_{L^p + L^q} = \inf \{\|f_1\|_p + \|f_2\|_q \colon f_1 \in L^p(\R^d),\, f_2 \in L^q(\R^d),\, f = f_1 + f_2\},
	\end{equation*}
	and its dual space can be identified with $L^{p'}(\R^d) \cap L^{q'}(\R^d)$, where $p'$ and $q'$ are the conjugate exponents of $p$ and $q$, respectively, with the following norm
	\begin{equation*}
		\|f\|_{L^{p'} \cap L^{q'}} = \max \{\|f\|_{p'},\, \|f\|_{q'}\}.
	\end{equation*}
	Then, thanks to Theorem \ref{th:main theorem} we are able to give an optimal estimate for $\| |\V_{\varphi} f |^2 \|_{L^p + L^q}$, which reduces to the $d$-dimensional version of \eqref{eq:Lieb's inequality} when $p=q$.
	\begin{cor}\label{cor:estimate for spectrogram}
		Let $p,q \in (1,+\infty)$. Then, for every $f \in L^2(\R^d)$ it holds that
		\begin{enumerate}[label*=(\roman{*})]
			\item If $p^{d(\kappa_p-\kappa_q)} \left(\dfrac{\kappa_q}{\kappa_p}\right)^{d \kappa_q} \leq 1$ then
			\begin{equation*}
				\| |\V_{\varphi} f |^2 \|_{L^p + L^q} \leq \kappa_p^{d\kappa_p} \|f\|_2^2.
			\end{equation*}
			\item If $q^{d(\kappa_p-\kappa_q)}\left(\dfrac{\kappa_q}{\kappa_p}\right)^{d\kappa_p} \geq 1$ then
			\begin{equation*}
				\| |\V_{\varphi} f |^2\|_{L^p + L^q} \leq \kappa_q^{d\kappa_q} \|f\|_2^2.
			\end{equation*}
			\item If $q^{d(\kappa_p-\kappa_q)}\left(\dfrac{\kappa_q}{\kappa_p}\right)^{d\kappa_p} < 1 < p^{d(\kappa_p-\kappa_q)} \left(\dfrac{\kappa_q}{\kappa_p}\right)^{d \kappa_q}$ then
			\begin{equation*}
				\| |\V_{\varphi} f|^2 \|_{L^p + L^q} \leq \left( \int_{0}^{+\infty} G(u(t)) \, dt \right) \|f\|_2^2,
			\end{equation*}
			where $u(t)$ is given by \eqref{eq:optimal distribution function} and $\lambda_1, \lambda_2$ are uniquely determined by \eqref{eq:equations for multipliers} with $A=B=1$.
		\end{enumerate}
		Finally, in every case, equality is achieved if and only if $f$ is of the kind \eqref{eq: optimal function Nicola-Tilli Faber-Krahn for STFT dimension d} for some $c \in \C$ and some $z_0 = (x_0, \omega_0) \in \R^{2d}$.
	\end{cor}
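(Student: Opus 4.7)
The plan is to deduce the corollary from Theorem \ref{th:main theorem} by a standard duality argument. First I would recall the identity
\begin{equation*}
\int_{\R^{2d}} F(z)|\V_\varphi f(z)|^2\,dz = \langle L_{F,\varphi}f, f\rangle,
\end{equation*}
which is immediate from $L_{F,\varphi} = \V_\varphi^*F\V_\varphi$ and gives $|\langle L_{F,\varphi}f,f\rangle| \leq \|L_{F,\varphi}\|\|f\|_2^2$ by Cauchy--Schwarz. Combining this with the duality $(L^p+L^q)^* = L^{p'} \cap L^{q'}$ (with norm $\|F\|_{L^{p'} \cap L^{q'}} = \max(\|F\|_{p'}, \|F\|_{q'})$) recalled just before the statement, one obtains
\begin{equation*}
\||\V_\varphi f|^2\|_{L^p+L^q} \leq \bigl(\sup\{\|L_{F,\varphi}\| : \|F\|_{p'}\leq 1,\ \|F\|_{q'}\leq 1\}\bigr)\|f\|_2^2,
\end{equation*}
so the corollary reduces to computing this supremum via Theorem \ref{th:main theorem}.

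Next I would apply Theorem \ref{th:main theorem} with exponents $p',q'$ and $A = B = 1$. Using the elementary identities $\kappa_{p'}=1/p$, $\kappa_{q'}=1/q$, $p'/q'=\kappa_q/\kappa_p$, and $1/q'-1/p'=\kappa_p-\kappa_q$, the two threshold quantities of Theorem \ref{th:main theorem}\ref{th:main theorem i} translate into
\begin{equation*}
p^{d(\kappa_p-\kappa_q)}(\kappa_q/\kappa_p)^{d\kappa_q} \quad \text{and} \quad q^{d(\kappa_p-\kappa_q)}(\kappa_q/\kappa_p)^{d\kappa_p},
\end{equation*}
which are precisely the quantities separating cases (i), (ii), (iii) of the corollary. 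In each regime the bound on $\|L_{F,\varphi}\|$ asserted by Theorem \ref{th:main theorem} then supplies the desired constant in front of $\|f\|_2^2$; in the intermediate regime this constant is the explicit integral $\int_0^{+\infty} G(u(t))\,dt$ with $u$ defined through \eqref{eq:equations for multipliers} using $A = B = 1$.

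For the equality characterisation, I would use the final clause of Theorem \ref{th:main theorem}. If $f$ is of the form \eqref{eq: optimal function Nicola-Tilli Faber-Krahn for STFT dimension d} centred at some $z_0=(x_0,\omega_0)$, then $|\V_\varphi f|^2$ is a translated isotropic Gaussian centred at $z_0$, and choosing for $F$ the extremal weight from Theorem \ref{th:main theorem} centred at the same $z_0$ and renormalised so that $\|F\|_{p'} = 1$ or $\|F\|_{q'} = 1$ (whichever constraint is binding) produces an admissible $F$ realising the sup. Conversely, equality in the corollary forces the existence of a maximising $F$ that simultaneously saturates the Theorem's bound and satisfies $\langle L_{F,\varphi}f,f\rangle = \|L_{F,\varphi}\|\|f\|_2^2$; applying the final clause of Theorem \ref{th:main theorem} with $g = f$ then forces $f$ itself to be of the form \eqref{eq: optimal function Nicola-Tilli Faber-Krahn for STFT dimension d}.

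The main obstacle I expect is not the inequality, which follows mechanically from Theorem \ref{th:main theorem}, but the equality characterisation: one must produce a maximising $F$ attaining the supremum defining the $L^p+L^q$-norm and show that its centre is forced to coincide with that of the Gaussian $|\V_\varphi f|^2$, so that the alignment needed to invoke the final clause of Theorem \ref{th:main theorem} is automatic.
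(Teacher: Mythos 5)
Your proposal is correct and follows essentially the same route as the paper: dualize the $L^p+L^q$ norm over the unit ball of $L^{p'}\cap L^{q'}$, recognize the pairing as $\langle L_{F,\varphi}f,f\rangle$, and apply Theorem \ref{th:main theorem} with exponents $p',q'$ and $A=B=1$. The threshold translation and the equality discussion you add are sound (aside from a sign typo: $1/q'-1/p'=\kappa_q-\kappa_p$, not $\kappa_p-\kappa_q$, though your final translated thresholds are stated correctly), and the ``centre alignment'' you flag as an obstacle is in fact automatic because you are free to choose the extremal $F$ centred at the same $z_0$ as $f$.
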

	\begin{proof}
		Given $f \in L^2(\R^d)$ we have:
		\begin{align*}
			\| |\V_{\varphi} f |^2 \|_{L^p + L^q} &= \max_{ \|F\|_{L^{p'} \cap L^{q'}} \leq 1} \lvert \langle F, 	|\V_{\varphi} f |^2 \rangle \rvert = \max_{ \|F\|_{L^{p'} \cap L^{q'}} \leq 1} \lvert \langle L_{F,\varphi}f, f \rangle \rvert \\
												  &\leq \left(\sup_{ \|F\|_{L^{p'} \cap L^{q'}} \leq 1} \|L_{F,\varphi}\|\right) \|f\|_2^2,
		\end{align*}
		and the result directly follow from Theorem \ref{th:main theorem} with exponents $p'$ and $q'$ and $A=B=1$.
	\end{proof}
	
	Finally, we recall that Fock space $\Fock^2(\C^d)$ is the space of all entire functions $F$ on $\C^d$ for which the norm
	\begin{equation*}
		\| F \|_{\Fock}^2 = \int_{\C^d} |F(z)|^2 e^{- \pi |z|^2} \, dz
	\end{equation*}
	is finite and that the Bargmann transform $\Barg \colon L^2(\R^d) \rightarrow \Fock^2(\C^d)$ defined as
	\begin{equation*}
		\Barg  f(z) = 2^{\frac{d}{4}} \int_{\R^d} f(t) e^{2 \pi t \cdot z - \pi |t|^2 - \frac{\pi}{2}z^2} \, dt
	\end{equation*}
	is a unitary operator (see \cite{grochenig}). The Bargmann transform is closed related with the STFT with Gaussian window thanks to the following relation:
	\begin{equation}\label{eq:formula STFT and Bargmann transform}
		\V_{\varphi} f (x, -\omega) = e^{\pi i x \cdot \omega} \Barg f(z) e^{-\pi |z|^2/2}, \quad z = x + i \omega \in \C^d.
	\end{equation}
	\begin{cor}\label{cor:estimate for functions in Fock space}
		Let $p, q \in (1,+\infty)$. Then, for every $F \in \Fock^2(\C^d)$ it holds that:
		\begin{enumerate}[label*=(\roman{*})]
			\item If $p^{d(\kappa_p-\kappa_q)} \left(\dfrac{\kappa_q}{\kappa_p}\right)^{d \kappa_q} \leq 1$ then
			\begin{equation*}
				\| |F|^2 e^{- \pi | \cdot |^2 } \|_{L^p + L^q} \leq \kappa_p^{d\kappa_p} \|F\|_{\Fock}^2.
			\end{equation*}
			\item If $q^{d(\kappa_p-\kappa_q)}\left(\dfrac{\kappa_q}{\kappa_p}\right)^{d\kappa_p} \geq 1$ then
			\begin{equation*}
				\| |F|^2 e^{- \pi | \cdot |^2} \|_{L^p + L^q} \leq \kappa_q^{d\kappa_q} \|F\|_{\Fock}^2.
			\end{equation*}
			\item If $q^{d(\kappa_p-\kappa_q)}\left(\dfrac{\kappa_q}{\kappa_p}\right)^{d\kappa_p} < 1 < p^{d(\kappa_p-\kappa_q)} \left(\dfrac{\kappa_q}{\kappa_p}\right)^{d \kappa_q}$ then
			\begin{equation*}
				\| |F|^2 e^{- \pi | \cdot |^2} \|_{L^p + L^q} \leq \left( \int_{0}^{+\infty} G(u(t)) \, dt \right) \|F\|_{\Fock}^2,
			\end{equation*}
			where $u(t)$ is given by \eqref{eq:optimal distribution function} and $\lambda_1, \lambda_2$ are uniquely determined by \eqref{eq:equations for multipliers} with $A=B=1$.
		\end{enumerate}
	\end{cor}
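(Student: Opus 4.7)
The plan is to reduce the statement to the previous Corollary \ref{cor:estimate for spectrogram} by exploiting the unitarity of the Bargmann transform $\Barg\colon L^2(\R^d) \to \Fock^2(\C^d)$ together with the identity \eqref{eq:formula STFT and Bargmann transform}. Since $\Barg$ is surjective, for any $F \in \Fock^2(\C^d)$ there exists a unique $f \in L^2(\R^d)$ with $\Barg f = F$, and unitarity yields $\|f\|_2 = \|F\|_{\Fock}$.

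Next, I would take the modulus squared of \eqref{eq:formula STFT and Bargmann transform}: the phase factor $e^{\pi i x \cdot \omega}$ disappears and one obtains the pointwise identity
\begin{equation*}
    |\V_\varphi f(x,-\omega)|^2 = |\Barg f(z)|^2 e^{-\pi|z|^2} = |F(z)|^2 e^{-\pi|z|^2},
\end{equation*}
where $z = x + i\omega$. Under the usual identification $\C^d \simeq \R^{2d}$, this says the function $|F|^2 e^{-\pi|\cdot|^2}$ on $\C^d$ coincides, up to the reflection $(x,\omega) \mapsto (x,-\omega)$, with the spectrogram $|\V_\varphi f|^2$ on $\R^{2d}$. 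Since this reflection is a measure-preserving isometry of $\R^{2d}$, it preserves every $L^r$ norm, hence also the $L^p + L^q$ norm (which is defined as an infimum over decompositions, and decompositions are transported bijectively by the reflection).

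Having established $\||F|^2 e^{-\pi|\cdot|^2}\|_{L^p + L^q} = \||\V_\varphi f|^2\|_{L^p + L^q}$, I would simply invoke Corollary \ref{cor:estimate for spectrogram} applied to $f$, using $\|f\|_2^2 = \|F\|_{\Fock}^2$. The three regimes, together with the explicit upper bounds in each of them, translate verbatim into the three statements of the current corollary. I do not expect a substantive obstacle: the only thing worth stating carefully is the invariance of $\|\cdot\|_{L^p+L^q}$ under the $\omega \mapsto -\omega$ reflection, which is immediate from the definition of that norm as an infimum over decompositions $f = f_1 + f_2$.
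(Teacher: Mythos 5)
Your proof is correct and follows essentially the same route as the paper: apply unitarity of the Bargmann transform to find $f$ with $\Barg f = F$, $\|f\|_2 = \|F\|_\Fock$, use \eqref{eq:formula STFT and Bargmann transform} to identify $|F|^2 e^{-\pi|\cdot|^2}$ with the spectrogram $|\V_\varphi f|^2$ up to the reflection $\omega \mapsto -\omega$, and invoke Corollary \ref{cor:estimate for spectrogram}. Your explicit remark that the reflection preserves the $L^p+L^q$ norm is a detail the paper leaves implicit, but it adds nothing beyond what the paper's argument already requires.
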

	\begin{proof}
		Since the Bargmann transform is a unitary operator, there exists $f \in L^2(\R^d)$ such that $\Barg f = F$ and $\|f\|_2 = \| F \|_{\Fock}$. Therefore, thanks to \eqref{eq:formula STFT and Bargmann transform} it holds that
		\begin{equation*}
			| \V_{\varphi} f (x, -\omega)|^2 = |F(z)|^2 e^{- \pi |z|^2}
		\end{equation*}
		and the proof is complete applying Corollary \ref{cor:estimate for spectrogram}.
	\end{proof}
	\appendix
	\begin{center}
		\begin{figure}[h]
			\includegraphics[scale=0.4]{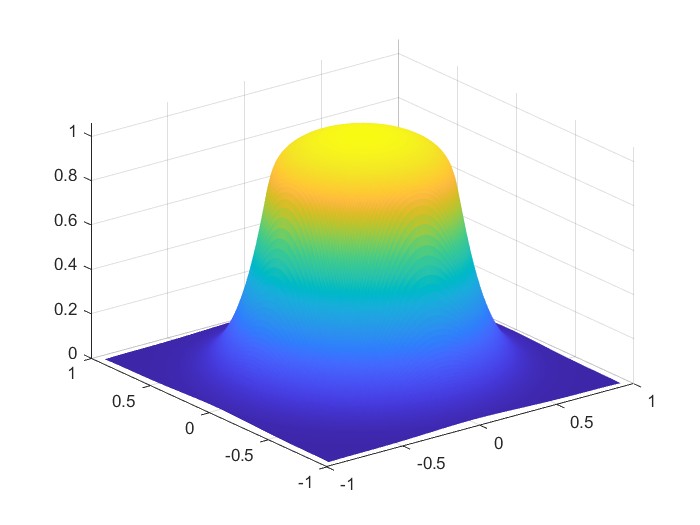}
			\caption{An optimal weight function with $d=1$, $p=1.5$, $q=20$, $A=B=1$. The plot was created with MATLAB R2022b.}
			\label{fig:optimal weight function}
		\end{figure}
	\end{center}
	
\nocite{*}
\printbibliography

\end{document}